\newtheorem{theorem}{Theorem}[section]
\newtheorem{lemma}[theorem]{Lemma}
\newcommand{\Lt}{L_2(\Omega)}
\def\st{\scriptscriptstyle}
\def\bz{\bm{\zeta}}
\def\asiph{a^{\text{sip}}_h}
\def\aarh{a^{\text{ar}}_h}
\def\Eo{\mathring{E}(\Delta;L_2(\Omega))}
\def\En{E(\Delta;L_2(\Omega))}
\def\bM{\mathbf{M}}
\def\bL{\mathbf{L}}
\def\bA{\mathbf{A}}
\def\bn{\mathbf{n}}
\def\by{\mathbf{y}}
\def\Ho{H^1_0(\Omega)}
\def\O{\Omega}
\def\LT{{L_2(\O)}}
\def\Linf{{L_\infty(\O)}}
\def\cT{\mathcal{T}}
\def\cE{\mathcal{E}}
\def\cL{\mathcal{L}}
\def\cA{\mathcal{A}}
\def\HO{{H^1(\O)}}
\def\vc{\mathring{V}_h^c}
\def\fC{\mathfrak{C}}
\theoremstyle{definition}
\newtheorem{example}[theorem]{Example}
\theoremstyle{remark}
\newtheorem{remark}[theorem]{Remark}
\numberwithin{equation}{section}
\newcommand{\trinorm}[1]{%
  \left|\mkern-2mu\left|\mkern-2mu\left|
   #1
  \right|\mkern-2mu\right|\mkern-2mu\right|
}
\DeclareMathOperator*{\argmin}{argmin}
\begin{document}

\title[DG For An Elliptic Optimal Control Problem]{Discontinuous Galerkin Methods for an Elliptic Optimal Control Problem with a General State Equation and Pointwise State Constraints}

\author{Sijing Liu, Zhiyu Tan and Yi Zhang}
\address{Sijing Liu, Department of Mathematics\\
University Of Connecticut\\
Storrs, CT\\
USA}
\email{sijing.liu@uconn.edu}
\address{Zhiyu Tan, Center for Computation $\&$ Technology\\
Louisiana State University \\
Baton Rouge, LA\\
USA}
\email{ztan@cct.lsu.edu}
\address{Yi Zhang, Department of Mathematics\\
University Of North Carolina Greensboro\\
Greensboro, NC\\
USA}
\email{y\_zhang7@uncg.edu}

\keywords{elliptic distributed optimal control problems, general state equations, pointwise state constraints, discontinuous Galerkin methods}
\subjclass{49J20, 49M41, 65N30, 65K15}
\date{\today}

\begin{abstract}
    We investigate discontinuous Galerkin methods for an elliptic optimal control problem with a general state equation and pointwise state constraints on general polygonal domains. We show that discontinuous Galerkin methods for general second-order elliptic boundary value problems can be used to solve the elliptic optimal control problems with pointwise state constraints. We establish concrete error estimates and numerical experiments are shown to support the theoretical results.
\end{abstract}

\maketitle

\section{Introduction}

Let $\Omega$ be a polygonal domain in $\mathbb{R}^2$, $y_d\in L_2(\Omega)$, $\beta$ be a positive constant and $g\in H^4(\O)$. The elliptic optimal control problem is to find
\begin{equation}\label{eq:optcon}
(\bar{y},\bar{u})=\argmin_{(y,u)\in \mathbb{K}_g}\left [ \frac{1}{2}\|y-y_d\|^2_{L_2(\Omega)}+\frac{\beta}{2}\|u\|^2_{L_2(\Omega)}\right],
\end{equation} 
where $(y,u)$ belongs to $\mathbb{K}_g\subset \HO\times \LT$ if and only if 
\begin{equation}\label{eq:stateeq}
\begin{aligned}
a(y,v)&=\int_{\Omega}uv \ dx \quad \forall v\in H^1_0(\Omega),\\
y&=g\quad\text{on}\quad\partial\O,
\end{aligned}
\end{equation}
and the pointwise state constraint
\begin{equation}\label{eq:statecon}
y\le\psi \quad \mbox{a.e. in}\ \ \Omega, 
\end{equation}
where the function
\begin{equation}\label{eq:phiassump}
\psi\in W^{3,p}\ \text{with}\ p>2\ \text{and}\ \psi>g\ \text{on}\ \partial\O.      
 \end{equation} 

The bilinear form $a(\cdot,\cdot)$ is defined as,
\begin{equation}\label{eq:generalsec}
a(y,v)=\int_{\Omega} \nabla y\cdot \nabla v\ dx+\int_{\Omega} (\bz\cdot\nabla y) v\ dx+\int_{\Omega} \gamma yv\ dx,       
\end{equation}
where the vector field $\bz\in [W^{1,\infty}(\Omega)]^2$ and the function $\gamma\in W^1_{\infty}(\Omega)$ is nonnegative. If $\bz\ne0$, then the constraint \eqref{eq:generalsec} is the weak form of a general second order PDE with an advective/convective term. We assume 
\begin{equation}\label{eq:advassump}
    \gamma-\frac12\nabla\cdot\bz\ge\gamma_0>0
\end{equation}
such that the problem \eqref{eq:stateeq} is well-posed.

Here and throughout the paper we will follow the standard notation for differential operators,
 function spaces and norms that can be found for example in \cite{Ciarlet,BS}.

We define the subspace $\Eo$ of $H^1_0(\Omega)$ as
\begin{equation}
    \Eo=\{y\in H^1_0(\Omega):\mathcal{L} y\in \LT\},
\end{equation}
where $\mathcal{L}y=-\Delta y+\bz\cdot\nabla y+\gamma y$.
We also denote
\begin{equation}
    \En=\{y\in H^1(\Omega):\mathcal{L} y\in \LT\}.
\end{equation}

Due to the elliptic regularity \cite{Dauge,nazarov2011elliptic}, $\Eo$ is a subspace of $H^{1+\alpha}(\Omega)\cap H^2_{loc}(\O)\cap H^1_0(\O)$ for some $\alpha\in(\frac12,1]$, where $\alpha=1$ if $\Omega$ is convex, and
\begin{equation}\label{eq:globalreg}
\|z\|_{H^{1+\alpha}(\Omega)}\le C_{\Omega}\|\cL z\|_{L_2(\Omega)}\quad\forall z\in \Eo.
\end{equation}
It follows from \eqref{eq:globalreg} and the Sobolev inequality \cite[Theorem 4.12]{adams2003sobolev} that $g+\Eo\subset C(\bar{\Omega})$. Therefore we can reformulate \eqref{eq:optcon}-\eqref{eq:statecon} as the following,
\begin{equation}\label{eq:optcon1}
\bar{y}=\argmin_{y\in K_g}\left [ \frac{1}{2}\|y-y_d\|^2_{L_2(\Omega)}+\frac{\beta}{2}\|\cL y\|^2_{L_2(\Omega)}\right],
\end{equation} 
where
\begin{equation}\label{eq:kgdef}
K_g=\{y\in g+\Eo: y\le\psi\ \ \mbox{in}\ \ \Omega\}.
\end{equation}

 Optimal control problems with pointwise state constraints are more difficult to analyze due to the low regularity of the Lagrange multiplier. In \cite{brenner2017new,casas2014new}, the authors proved that the Lagrange multiplier $\mu$ is a nonnegative Borel measure and $\mu\in H^{-1}(\O)$ at the same time. By using this regularity result, the pointwise state constraints can then be handled. 

In the case $\bz=\mathbf{0}$ and $\gamma=0$, the distributed optimal control problem with pointwise state constraints \eqref{eq:optcon}-\eqref{eq:statecon} is investigated in \cite{POne,casas2014new,meyer2008error,liu2009new,rosch2012posteriori, cherednichenko2009error, hintermuller2009moreau, hintermuller2014length} using $P_1$ finite element methods. Several extensions \cite{brenner2020p1,brenner2021p1} to the new approach in \cite{POne} have been established. Other methods are proposed for \eqref{eq:optcon}-\eqref{eq:statecon}, for example, $C^0$ interior penalty methods \cite{brenner2018c,BSZ}, Morley finite element methods \cite{brenner2018morley}, and virtual element methods \cite{brenner2021ac}. Fast solvers for \eqref{eq:optcon}-\eqref{eq:statecon} are also studied in \cite{brenner2023multigrid,brenner2020additive}. We refer to \cite{brenner2020finite} for a more detailed survey about finite element methods for \eqref{eq:optcon}-\eqref{eq:statecon}.
 Overall, as pointed out in \cite{brenner2020finite}, optimal control problems with pointwise state constraints can be analyzed using known finite element methods for fourth order boundary value problems. This crucial observation opens doors to many possible numerical methods for optimal control problems with pointwise state constraints. 

Recently, discontinuous Galerkin (DG) methods for \eqref{eq:optcon}-\eqref{eq:statecon} with $\bz=\mathbf{0}$ and $\gamma=0$ were proposed and analyzed in \cite{BGS_DG} where a new interior maximum estimate \cite{DG_interior} was utilized. For the general case where $\bz\ne\mathbf{0}$ and $\gamma\ne0$, a continuous $P_1$ finite element method was proposed and analyzed in \cite{brenner2021p1}. The goal of this paper is to extend the results in \cite{BGS_DG,brenner2021p1} to an optimal control problem with a general state equation \eqref{eq:optcon}-\eqref{eq:statecon}.  The reason for using a DG method is to enable a fast solution of the discrete problem by the primal-dual active algorithm that converges superlinearly (see Remark \ref{remark:dgmass}). Since the bilinear form \eqref{eq:statecon} is nonsymmetric, we must employ an adjoint consistent \cite{arnold2002unified} method \eqref{eq:ardef} in order to obtain estimates involving $R_h$ and $E_h$ which are key ingredients in the convergence analysis.

 The rest of the paper is organized as follows. In Section \ref{sec:contprob}, we gather some known regularity results for the continuous problem \eqref{eq:optcon}-\eqref{eq:statecon}. These results are useful in the convergence analysis. In Section \ref{sec:disprob}, we propose mixed discontinuous Galerkin methods to solve the problem \eqref{eq:optcon}-\eqref{eq:statecon} and establish some important properties of the discrete problem. Three crucial operators $E_h$, $R_h$ and $\fC_h$ are defined and analyzed in Section \ref{sec:preesti}.  
 Concrete error estimates are established in Section \ref{sec:convanalysis}. Numerical results are provided in Section \ref{sec:numerics} and we end with some concluding remarks in Section \ref{sec:conremark}. A description of the primal-dual active set algorithm is given in Appendix \ref{apdix:pdas} and the proofs of Lemma \ref{lemma:ah} and Lemma \ref{lemma:rhestimates} are provided in Appendix \ref{apdix:pfrh}.

Throughout this paper, we use $C$
 (with or without subscripts) to denote a generic positive
 constant that is independent of any mesh
 parameters.
  Also to avoid the proliferation of constants, we use the
   notation $A\lesssim B$ (or $A\gtrsim B$) to
  represent $A\leq \text{(constant)}B$. The notation $A\approx B$ is equivalent to
  $A\lesssim B$ and $B\lesssim A$.

\section{The Continuous Problem}\label{sec:contprob}

Let $\bar{z}=\bar{y}-g$. Then the problem \eqref{eq:optcon1}-\eqref{eq:kgdef} is equivalent to find
\begin{equation}\label{eq:optcon2}
\bar{z}=\argmin_{z\in \widetilde{K}}\left [ \frac{1}{2}\|z-(y_d-g)\|^2_{L_2(\Omega)}+\frac{\beta}{2}\|\cL (z+g)\|^2_{L_2(\Omega)}\right],
\end{equation} 
where
\begin{equation}
\widetilde{K}=\{z\in \Eo: z\le\psi-g\ \ \mbox{in}\ \ \Omega\}.
\end{equation}

By the classical theory of variation of calculus, it is well-known that there exists a unique solution $\bar{z}$ to \eqref{eq:optcon2}. Consequently, the problem \eqref{eq:optcon1} has a unique solution and $\bar{y}$ can be characterized by
\begin{equation}\label{eq:vi}
(\bar{y}-y_d ,y-\bar{y})_{\Lt}+\beta(\cL\bar{y},\cL (y-\bar{y}))_{\Lt}\ge 0\quad \forall\ y\in K_g.
\end{equation}

\vspace{0.3cm}
\noindent{\bf Interior regularity of $\bar{y}$}
\vspace{0.3cm}

By the interior regularity results for fourth order variational inequalities \cite{frehse1971,frehse1973regularity}, we have $\bar{z}\in H^3_{loc}(\Omega)\cap W^{2,\infty}_{loc}(\Omega)$. Since $g\in H^4(\O)$, which is a subspace of $W^{2,\infty}(\O)$ by the Sobolev inequality, we conclude
\begin{equation}\label{eq:interreg}
    \bar{y}\in H^3_{loc}(\Omega)\cap W^{2,\infty}_{loc}(\Omega).
\end{equation}
A proof can be found in \cite{brenner2021p1}.

\vspace{0.3cm}
\noindent{\bf Lagrange multiplier $\mu$}
\vspace{0.3cm}

Taking $y=-\phi+\bar{y}\in K_g$ in \eqref{eq:vi} where $\phi$ is a nonnegative function in $C^{\infty}_0(\Omega)$. Thus we have 
 \begin{equation}\label{eq:functional}
 \int_{\Omega}  \Big[ (\bar{y}-y_d)\phi+\beta(\cL\bar{y})(\cL \phi)\Big] dx \le 0.
 \end{equation}
It follows from {\cite[Section 13, Theorem 25]{royden1988real}} or {\cite[Theorem 2.14]{rudin2006real}} that 
\begin{equation}\label{eq:rrt}
\int_{\Omega}  \Big[ (\bar{y}-y_d)z+\beta(\cL\bar{y})(\cL z)\Big] dx = \int_{\Omega} z\ d\mu\quad \forall z\in \Eo,
\end{equation}
where
\begin{equation}\label{eq:mumeasure}
     \mu\ \text{is a non-positive regular Borel measure}. 
 \end{equation} 
Furthermore, it can be proved \cite{brenner2017new,casas2014new} that 
\begin{equation}\label{eq:mureg}
    \mu\in H^{-1}(\O),
\end{equation} 
and we have the following complimentarity condition

\begin{equation}\label{eq:comp}
\int_{\Omega} (\psi-\bar{y})\ \!d\mu=0.
\end{equation}

\vspace{0.3cm}
\noindent{\bf Regularity of $\bar{u}$ and global regularity of $\bar{y}$}
\vspace{0.3cm}

It can be proved \cite{brenner2018c,POne} that $\bar{u}\in \Ho$. According to \eqref{eq:globalreg}, we have 
\begin{equation}\label{eq:regu}
\bar{y}\in H^{1+\alpha}(\Omega),
\end{equation}
 where $\alpha\in(\frac12,1]$ and 
 \begin{equation}
\|\bar{y}\|_{H^{1+\alpha}(\Omega)}\le C_{\Omega}\|\cL\bar{y}\|_{L_2(\Omega)}.
\end{equation}

\section{The Discrete Problem}\label{sec:disprob}

Let $\mathcal{T}_h$ be a shape regular simplicial triangulation of $\Omega$.
The diameter of $T\in\mathcal{T}_h$ is denoted by $h_T$ and $h=\max_{T\in\mathcal{T}_h}h_T$ is the mesh diameter. 
Let $\mathcal{E}_h=\mathcal{E}^b_h\cup\mathcal{E}^i_h$ where $\cE^i_h$ (resp. $\cE^b_h$) represents the set of interior edges (resp. boundary edges).

We further decompose the boundary edges $\cE^b_h$ into the inflow part $\cE^{b,-}_h$ and the outflow part $\cE^{b,+}_h$ which are defined as follows,
\begin{align}
    \cE^{b,-}_h&=\{e\in\cE^b_h: e\subset\{x\in\partial\O: \bz(x)\cdot\mathbf{n}(x)<0\}\},\\
    \cE^{b,+}_h&=\cE^b_h\setminus\cE^{b,-}_h.
\end{align}

For an edge $e\in \mathcal{E}^i_h$, let $h_e$ be the length of $e$. For each edge we associate a fixed unit normal $\mathbf{n}$. We denote by $T^+$ the element for which $\mathbf{n}$ is the outward normal, and $T^-$ the element for which $-\mathbf{n}$ is the outward normal. We define the discontinuous finite element space $V_h$ as 
\begin{equation}
    V_h=\{v\in\LT:v|_T\in\mathbb{P}_1(T)\quad\forall\ T\in\mathcal{T}_h\}.
\end{equation}
For $v\in V_h$ on an edge $e$, we define
\begin{equation}
    v^+=v|_{T^+}\quad\text{and}\quad v^-=v|_{T^-}.
\end{equation}
We define the jump and average for $v\in V_h$ on an edge $e$ as follows,
\begin{equation}
    [v]=v^+-v^-,\quad \{v\}=\frac{v^++v^-}{2}.
\end{equation}
For $e\in\mathcal{E}_h^b$ with $e\in\partial T$, we let
\begin{equation}
    [v]=\{v\}=v|_T.
\end{equation}
We also denote 
\begin{equation}
    (w,v)_e:=\int_e wv\ \!ds\quad\text{and}\quad(w,v)_T:=\int_T wv\ \!dx.
\end{equation}

\subsection{Mixed discontinuous Galerkin methods}
We define the piecewise $H^s$ space with $s>\frac32$ as
\begin{equation}\label{eq:broh1}
    H^s(\O;\mathcal{T}_h)=\{v\in \LT: v|_T\in H^s(T) \quad\forall\ T\in\mathcal{T}_h\}.
\end{equation}
Define  $\cL_{h,g}: H^s(\O;\mathcal{T}_h)\rightarrow V_h$ as 
\begin{equation}\label{eq:ddef}
\begin{aligned}
    (\cL_{h,g}w, v)_\LT&=a_h(w, v)+\sum_{e\in\mathcal{E}_h^b}(g,\mathbf{n}\cdot\nabla v-\frac{\sigma}{h_e}v)_e\\
    &\hspace{0.5cm}+\sum_{e\in\cE_h^{b,-}}(\bn\cdot\bz g,v)_e\quad \forall v\in V_h,
    \end{aligned}
\end{equation}
where
\begin{equation}
    a_h(w,v)=a_h^{\text{sip}}(w,v)+a^{\text{ar}}_h(w,v)\quad\forall w,v\in V_h.
\end{equation}
Here 
\begin{equation}\label{eq:dgbilinear}
\begin{aligned}
    a^{\text{sip}}_h(w,v)=&\sum_{T\in\mathcal{T}_h}(\nabla w, \nabla v)_T-\sum_{e\in\mathcal{E}_h}(\{\mathbf{n}\cdot\nabla w\},[v])_e
    -\sum_{e\in\mathcal{E}_h}(\{\mathbf{n}\cdot\nabla v\},[w])_e\\
    &+\sigma\sum_{e\in\mathcal{E}_h} h_e^{-1}([w],[v])_e
\end{aligned}
\end{equation}
is the bilinear form of the symmetric interior penalty (SIP) method with sufficiently large penalty parameter $\sigma$ and
\begin{equation}\label{eq:ardef}
\begin{aligned}
    a^{\text{ar}}_h(w,v)=\sum_{T\in\mathcal{T}_h}(\bz\cdot\nabla w+\gamma w, v)_T
     -\sum_{e\in\cE^i_h\cup\cE^{b,-}_h}(\bn\cdot\bz[w],\{v\})_e.
\end{aligned}
\end{equation}
is the unstabilized DG scheme for advection and reaction terms (cf. \cite{brezzi2004discontinuous} and \cite[Section 2.2]{di2011mathematical}).

\begin{remark}
   We do not consider convection-dominated case in this paper. Therefore the bilinear form $\aarh(\cdot,\cdot)$ does not contain any stabilization terms. However, if one considers convection-dominated case, the following well-known \cite{leykekhman2012local,ayuso2009discontinuous} upwind scheme can be utilized,
    \begin{equation}
         a^{\text{ar}}_h(w,v)=\sum_{T\in\mathcal{T}_h}(\bz\cdot\nabla w+\gamma w, v)_T-\sum_{e\in\cE^i_h}(\bn\cdot\bz[w],v^{\text{up}})_e-\sum_{e\in\cE^{b,-}_h}(\bn\cdot\bz\ \!w,v)_e,
     \end{equation} 
     where the upwind value $v^{\text{up}}$ of a function on an interior edge $e\in\cE_h^i$ is defined as
\begin{equation}\label{eq:upwvalue}
    v^{\text{up}}=\left\{
    \begin{aligned}
    v^+\quad\text{if}\quad\bz\cdot\mathbf{n}\ge0,\\
    v^-\quad\text{if}\quad\bz\cdot\mathbf{n}<0.
    \end{aligned}
    \right.
\end{equation}
\end{remark}

Then the discrete problem for \eqref{eq:optcon1} is to find
\begin{equation}\label{eq:discretecon}
\bar{y}_h=\argmin_{y_h\in K_h}\left [ \frac{1}{2}(y_h-y_d,y_h-y_d)_{\Lt}+\frac{\beta}{2}(\cL_{h,g} y_h,\cL_{h,g} y_h)_{\Lt}\right],
\end{equation}
where
\begin{equation}\label{eq:khdef}
K_h=\{y\in V_h: y_T(p)\le \psi(p)\ \ \mbox{for all}\ \ p\in\mathcal{V}_T\ \text{and all}\ T\in\mathcal{T}_h\}.
\end{equation}
Here $\mathcal{V}_T$ is the set of vertices of $T\in\cT_h$. Note that we impose the Dirichlet boundary condition $y=g$ weakly through $\cL_{h,g}$.

\begin{remark}\label{remark:dgmass}
    The discrete problem \eqref{eq:discretecon}-\eqref{eq:khdef} can be solved by a primal-dual
active set method (see Appendix \ref{apdix:pdas}). Let $\bM_h$ denote the mass matrix represent the bilinear form $(\cdot,\cdot)_\LT$ with respect to the natural discontinuous nodal basis in $V_h$. Note that the computation of $\cL_{h,g}$ involves $\bM_h^{-1}$. In contrast to \cite{POne,brenner2020p1,brenner2021p1}, the matrix $\bM_h$ is block diagonal hence $\bM_h^{-1}$ can be obtained easily.
\end{remark}

\subsection{Properties of $a_h(\cdot,\cdot)$}

Let $D$ be a subdomain of $\O$ and $\cT_h(D)$ be a collection of all the elements with a nonempty intersection with $D$. Define a mesh-dependent norm on $V_h$,
\begin{equation}\label{eq:henergynorm}
    \trinorm{v}_{h,D}^2=\sum_{T\in\mathcal{T}_h(D)}\left[\|\nabla v\|^2_{L_2(T)}+\sum_{e\in\partial T}\frac{\sigma}{h_e}\|[v]\|^2_{L_2(e)}+\sum_{e\in\partial T}\frac{h_e}{\sigma}\|\{\bn\cdot\nabla v\}\|^2_{L_2(e)}\right].
\end{equation}
We use $\trinorm{v}_{h}$ to denote the norm $\trinorm{v}_{h,\O}$ if there is no ambiguity. The following lemma establishes the continuity and coercivity of $a_h(\cdot,\cdot)$, which is standard for the discontinuous Galerkin methods \cite{arnold2002unified,riviere2008discontinuous,BS}. The proof is provided in Appendix \ref{apdix:pfrh}.
\begin{lemma}\label{lemma:ah}
    We have
        \begin{alignat}{3}
            a_h(w,v)&\le C\trinorm{w}_{h}\trinorm{v}_{h}\quad&&\forall w,v\in \En+V_h,\label{eq:ahcont}\\
            a_h(v,v)&\ge C\trinorm{v}^2_{h}\quad&&\forall v\in V_h,\label{eq:ahcoer}
        \end{alignat}
    for large enough $\sigma$.
\end{lemma}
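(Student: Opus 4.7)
The plan is to establish the two estimates separately by splitting $a_h=a_h^{\text{sip}}+a_h^{\text{ar}}$ and treating each piece with standard DG techniques.

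For the continuity estimate \eqref{eq:ahcont}, I would first note that for $w\in\En$ elliptic regularity gives $w\in H^{1+\alpha}(\Omega)$ with $\alpha>\tfrac12$, so the edge trace $\{\bn\cdot\nabla w\}|_e$ lies in $L_2(e)$ and $\trinorm{w}_h$ is well-defined. Then I would apply Cauchy--Schwarz termwise to \eqref{eq:dgbilinear}, pairing the consistency factors $\{\bn\cdot\nabla\cdot\}$ with $[\cdot]$ through the factorization $h_e^{1/2}\sigma^{-1/2}\cdot h_e^{-1/2}\sigma^{1/2}$ so that each factor is exactly one of the three ingredients of $\trinorm{\cdot}_h$. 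For $\aarh$, I would bound the volume term by $\|\bz\|_{\Linf}$ and $\|\gamma\|_{\Linf}$ times $\|\nabla w\|_{L_2(T)}\|v\|_{L_2(T)}$ (plus the $L_2$ reaction term), and the edge term by $\|\bz\|_{\Linf}\|[w]\|_{L_2(e)}\|\{v\}\|_{L_2(e)}$; a trace inequality controls $\|v\|_{L_2(T)}$ and $\|\{v\}\|_{L_2(e)}$ by $\trinorm{v}_h$ (using Friedrichs/Poincar\'e-type estimates valid on broken spaces involving the jump terms).

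For the coercivity \eqref{eq:ahcoer}, I would treat the SIP part by the classical argument: write
\[
a_h^{\text{sip}}(v,v)=\sum_{T}\|\nabla v\|_{L_2(T)}^2-2\sum_{e}(\{\bn\cdot\nabla v\},[v])_e+\sigma\sum_e h_e^{-1}\|[v]\|_{L_2(e)}^2,
\]
apply Young's inequality to the cross term, and use the discrete trace inequality $h_e\|\{\bn\cdot\nabla v\}\|_{L_2(e)}^2\lesssim\|\nabla v\|_{L_2(T)}^2$ (valid for $v\in V_h$) to absorb it into the first and third terms when $\sigma$ is sufficiently large; the same trace inequality also shows the $\{\bn\cdot\nabla v\}$ contribution of $\trinorm{v}_h^2$ is controlled by the gradient piece. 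For $\aarh$ I would integrate by parts elementwise, yielding
\[
\sum_T(\bz\cdot\nabla v+\gamma v,v)_T=\int_\O\Bigl(\gamma-\tfrac12\nabla\cdot\bz\Bigr)v^2\,dx+\tfrac12\sum_T\int_{\partial T}(\bz\cdot\bn)v^2\,ds,
\]
and then reassemble the interior-edge contributions using the identity $[v]\{v\}=\tfrac12([v^2])$: this shows that the jump subtraction $-(\bn\cdot\bz[v],\{v\})_e$ on $\cE_h^i$ exactly cancels the interior-edge part of $\tfrac12\sum_T\int_{\partial T}(\bz\cdot\bn)v^2$, while on $\cE_h^{b,-}$ the subtraction combines with the inflow boundary contribution to produce the nonnegative quantity $-\tfrac12(\bz\cdot\bn)v^2$, and the outflow boundary leaves $+\tfrac12(\bz\cdot\bn)v^2\ge 0$. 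Invoking the standing assumption \eqref{eq:advassump} then yields $\aarh(v,v)\ge\gamma_0\|v\|_{L_2(\Omega)}^2\ge 0$, which when added to the SIP lower bound completes \eqref{eq:ahcoer}.

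The main obstacle I anticipate is the bookkeeping in the coercivity of $\aarh$: it is easy to slip a sign, miscount which edges contribute from which elements, and lose the precise cancellation between the elementwise boundary terms produced by integration by parts and the explicit edge terms in the definition \eqref{eq:ardef}. Getting this cancellation correct on interior edges and obtaining the right (positive) sign on the inflow/outflow boundaries is the crux; once that is in hand, the SIP piece and the continuity estimates are routine.
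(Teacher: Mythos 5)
Your proposal is correct and follows essentially the same route as the paper: split $a_h$ into $a_h^{\text{sip}}$ and $a_h^{\text{ar}}$, handle the SIP part by the classical continuity/coercivity argument (which the paper simply cites as well known), bound $a_h^{\text{ar}}$ via Cauchy--Schwarz together with the scaled trace inequality and the broken Poincar\'e inequality, and obtain nonnegativity of $a_h^{\text{ar}}(v,v)$ by elementwise integration by parts, the identity $[v]\{v\}=\tfrac12[v^2]$ on interior edges, the sign analysis on inflow/outflow boundary edges, and assumption \eqref{eq:advassump}. Your cancellation bookkeeping matches the paper's computation exactly (you even record the slightly sharper lower bound $a_h^{\text{ar}}(v,v)\ge\gamma_0\|v\|_{\LT}^2$), so no further comparison is needed.
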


\subsection{Properties of $\cL_{h,g}$}
By the definition of $\cL_{h,g}$ and integration by parts, we have for any $w\in g+\Eo$ %({\red g+V})
\begin{equation}
\begin{aligned}
    (\cL_{h,g}w, v)_\LT&=a_h(w, v)_\LT+\sum_{e\in\mathcal{E}_h^b}(g,\mathbf{n}\cdot\nabla v-\frac{\sigma}{h_e}v)_e+\sum_{e\in\cE_h^{b,-}}(\bn\cdot\bz g,v)_e\\
    &=(\cL w, v)_\LT\quad \forall v\in V_h.
    \end{aligned}
\end{equation}
which gives the consistency of $\cL_{h,g}$. Moreover, we have
\begin{equation}\label{eq:l2proj}
    \cL_{h,g}w=Q_h\cL w\quad\forall w\in g+\Eo, %({\red g+V}),
\end{equation}
where $Q_h$ is the orthogonal projection from $\LT$ onto $V_h$.

\subsection{Discrete variational inequalities}
Define $\cL_h: H^s(\O;\mathcal{T}_h)\rightarrow V_h$ as
\begin{equation}\label{eq:ddef0}
    (\cL_hw, v)_\LT=a_h(w, v)_\LT\quad \forall v\in V_h.
\end{equation}
Note that we have the following,
\begin{equation}\label{eq:deltahrela}
    \cL_{h,g}w_1-\cL_{h,g}w_2=\cL_h(w_1-w_2)\quad\forall w_1,w_2\in H^s(\O;\mathcal{T}_h).
\end{equation}
In particular, we have
\begin{equation}\label{eq:l2projzero}
    \cL_hw=\cL_{h,0}w=Q_h\cL w\quad \forall w\in\Eo.
\end{equation}

 Since $K_h$ is a nonempty closed convex set and the objective function in
\eqref{eq:discretecon} is strongly convex, the discrete problem \eqref{eq:discretecon}-\eqref{eq:khdef} has a unique solution $\bar{y}_h\in V_h$, which can be characterized by
\begin{equation}\label{eq:vidis}
(\bar{y}_h-y_d ,y_h-\bar{y}_h)_{\Lt}+\beta(\cL_{h,g}\bar{y}_h,\cL_h (y_h-\bar{y}_h))_{\Lt}\ge 0\quad \forall\ y_h\in K_h.
\end{equation}

\section{Preliminary Estimates}\label{sec:preesti}

In this section, we establish some preliminary estimates for the convergence analysis. We consider both quasi-uniform meshes and graded meshes \cite{brannick2008uniform,babuvska1970finite,apel1996graded} around reentrant corners.
\subsection{Graded Meshes}

For a nonconvex domain with reentrant corners, it is well-known that the solution to the state equation \eqref{eq:stateeq} does not belong to $H^2(\O)$ in general (see \eqref{eq:regu}). To overcome this lack of regularity, we can use a triangulation $\cT_h$ with the following properties. Let $\omega_1, \omega_2, \ldots, \omega_L$ be the interior angles at the corners $c_1, c_2, \ldots, c_L$ of the bounded polygonal domain $\O$ and $c_T$ be the center of $T\in\cT_h$. There exists constants $C_1$ and $C_2$ such that
\begin{equation}\label{eq:gradeddef}
    C_1h_T\le\Phi_\mu(T)h\le C_2h_T,\quad\forall\ T\in\cT_h,
\end{equation}
where $\Phi_\mu(T)=\Pi_{l=1}^L|c_l-c_T|^{1-\mu_l}$. Here the grading parameters $\mu_1, \mu_2, \ldots, \mu_L$ are chosen as,
\begin{equation}
    \begin{aligned}
        \mu_l=1,&\quad \omega_l<\pi,\\
        \frac12<\mu_l<\frac{\pi}{\omega_l},&\quad \omega_l>\pi.
    \end{aligned}
\end{equation}
The construction of graded meshes that satisfy \eqref{eq:gradeddef} can be found in \cite{brannick2008uniform,babuvska1970finite,apel1996graded}. 

\subsection{Preliminary inequalities}
The following standard inequalities \cite{arnold2002unified,riviere2008discontinuous,BGS_DG} are needed. Assume $D$ is a subdomain of $\O$ such that $D\Subset\O$, i.e., the closure of $D$ is a compact set of $\O$. Note that $\cT_h$ is quasi-uniform around $D$ for graded meshes. Then a standard inverse estimate implies
        \begin{align}
            \trinorm{v_h}_{h,D}&\le Ch^{-1}\|v_h\|_{L_2(\O)}\quad\forall v_h\in V_h\label{eq:invlocal}.
        \end{align}
We also have the following discrete Sobolev inequality \cite{BGS_DG}
\begin{equation}\label{eq:discretesobo}
    \|y_h\|_\Linf\le C(1+|\ln h|)^\frac12\trinorm{y_h}_h\quad\forall y_h\in V_h.
\end{equation}
For $T\in\mathcal{T}_h$ and $v\in H^{1+s}(T)$ where $s\in(\frac12,1]$, the following trace inequalities with scaling is standard (cf. \cite[Lemma 7.2]{ern2017finite} and \cite[Proposition 3.1]{ciarlet2013analysis}),
    \begin{align}
         \|v\|_{L_2(\partial T)}&\le C(h_T^{-\frac12}\|v\|_{L_2(T)}+h_T^{s-\frac12}|v|_{H^s(T)}),\label{eq:traceinq}\\
         \|\nabla v\|_{L_2(\partial T)}&\le C(h_T^{-\frac12}\|\nabla v\|_{L_2(T)}+h_T^{s-\frac12}|\nabla v|_{H^s(T)}).\label{eq:traceinq1}
    \end{align}
The following discrete Poinca{\'r}e inequality for DG functions \cite{brenner2003poincare,ayuso2009discontinuous,chen2004pointwise} is valid for all $v\in V_h$,
\begin{equation}\label{eq:dgpoin}
    \|v\|^2_\LT\le C\left(\sum_{T\in\cT_h}\|\nabla v\|^2_{L_2(T)}+\sum_{e\in\cE_h}\frac{1}{h_e}\|[v]\|^2_{L_2(e)}\right).
\end{equation}

\subsection{Interpolation operator $I_h$}

Let $V_h^c$ be the conforming $P_1$ finite element space associated with $\cT_h$. 
We use the usual continuous nodal interpolant $I_h: \En\rightarrow V^c_h$ (which belongs to $V_h$) \cite{arnold2002unified,riviere2008discontinuous} such that the following holds.

\begin{lemma}\label{lem:interpo}
We have 
\begin{equation}\label{eq:interpolation}
  \|z-I_hz\|_{\Lt}+h\trinorm{z-I_hz}_h\lesssim h^{1+\tau}\|\cL z\|_{\Lt}\quad\forall z\in \En.
\end{equation}
For quasi-uniform or graded meshes, we also have
\begin{equation}\label{eq:interlinf}
  \|z-I_hz\|_{L^{\infty}(\Omega)}\le Ch^\tau\|\cL z\|_{\Lt}\quad\forall z\in \En.
\end{equation}

Here $\tau$ is defined by
\begin{equation}\label{eq:tau}
\tau=\left\{
\begin{array}{cl}
\alpha&\mbox{if}\ \mathcal{T}_h\ \mbox{is quasi-uniform,}\\
\\
1&\mbox{if}\ \mathcal{T}_h\ \mbox{is graded around the reentrant corners,}
\end{array}
\right.
\end{equation}
where $\alpha\in (\frac{1}{2},1]$ is the index of elliptic regularity in \eqref{eq:globalreg}.
\end{lemma}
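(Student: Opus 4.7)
The plan is to prove the three bounds element-by-element, relying on global $H^{1+\alpha}$ regularity together with the elliptic regularity estimate \eqref{eq:globalreg} in the quasi-uniform case, and on a singular-function decomposition tuned to the mesh grading in the graded case. Throughout I will exploit that $I_h z$ is continuous and $I_h z \in V_h$, and that $z \in H^{1+\alpha}(\Omega) \subset C(\bar\Omega)$ since $\alpha > 1/2$ in two dimensions.

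First I would handle the quasi-uniform case. On each $T \in \cT_h$ the classical Bramble-Hilbert estimates for the nodal interpolant on $\mathbb{P}_1$ give
\[
\|z - I_h z\|_{L_2(T)} + h_T |z - I_h z|_{H^1(T)} \lesssim h_T^{1+\alpha} |z|_{H^{1+\alpha}(T)},
\]
and after summing over $T$ the Sobolev regularity \eqref{eq:globalreg} yields the required $h^{1+\alpha}\|\cL z\|_{L_2}$ bound for the volumetric part of $\trinorm{z - I_h z}_h$ and for the $L_2$ norm. The edge terms in $\trinorm{\cdot}_h$ are handled as follows: on interior edges the jump $[z - I_h z]$ vanishes because both $z$ and $I_h z$ are continuous across $e$, so only boundary edges contribute, and there the scaled trace inequalities \eqref{eq:traceinq}--\eqref{eq:traceinq1} applied to $z - I_h z$ together with the elementwise estimate above yield the desired bound. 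The same trace inequalities control the normal-derivative term $\sum h_e \|\{\mathbf{n}\cdot\nabla(z - I_h z)\}\|_{L_2(e)}^2$ on interior edges.

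For the graded case I would follow the framework of \cite{apel1996graded,brannick2008uniform,babuvska1970finite}: decompose $z = z_{\text{reg}} + \sum_{\ell} \chi_\ell s_\ell$, where $z_{\text{reg}} \in H^2(\Omega)$ with $\|z_{\text{reg}}\|_{H^2} \lesssim \|\cL z\|_{L_2}$ and each $s_\ell$ is a corner singular function supported near a reentrant corner $c_\ell$ with known local behavior $|\nabla^2 s_\ell(x)| \lesssim |x - c_\ell|^{\pi/\omega_\ell - 2}$. For $z_{\text{reg}}$ the standard $h^2$ interpolation estimate applies directly. For $s_\ell$ I would use the grading condition \eqref{eq:gradeddef}, which implies $h_T \lesssim h^{1/\mu_\ell} |c_\ell - c_T|^{1 - 1/\mu_\ell}$ near $c_\ell$; choosing $\mu_\ell$ in the admissible range yields $h_T^2 |s_\ell|_{H^2(T)} \lesssim h^2$ per element and $h_T |s_\ell|_{H^1(T)} \lesssim h$ after summation (standard weighted Sobolev computation using polar coordinates). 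Combining these two contributions gives the sharp rate $\tau = 1$.

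Finally, for the $L^\infty$ estimate I would split the triangulation into elements away from the reentrant corners, where interior regularity (as recorded above \eqref{eq:interreg} and more generally for $\En$ via $\En \subset H^2_{loc}$) gives $\|z - I_h z\|_{L^\infty(T)} \lesssim h_T |z|_{W^{1,\infty}(T)} \lesssim h_T \|z\|_{H^2(\omega_T)}$ on a shape-regular patch, and elements abutting a corner, where the singular-function expansion and the grading condition \eqref{eq:gradeddef} are used again to produce $h^\tau$. The main obstacle I anticipate is this last ingredient: extracting $h^\alpha$ (quasi-uniform) or $h$ (graded) in $L^\infty$ near corners from only $L_2$ control of $\cL z$, since $H^{1+\alpha}$ with $\alpha > 1/2$ barely embeds into $C^{0,\alpha - 1/2}$ and the naive Sobolev embedding loses a factor $h^{1/2}$; overcoming this requires the corner-singular decomposition and careful use of the grading weight rather than a generic Sobolev embedding.
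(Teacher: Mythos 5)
The paper offers no proof of Lemma \ref{lem:interpo}: the estimates are quoted as standard facts, with the references listed next to the definition of $I_h$ and the graded-mesh literature expected to supply the details. Your proposal is essentially that standard argument --- elementwise Bramble--Hilbert estimates in $H^{1+\alpha}$, the observation that $[z-I_hz]$ vanishes on interior edges so that only boundary edges and the averaged normal-derivative terms need the scaled trace inequalities \eqref{eq:traceinq}--\eqref{eq:traceinq1}, and a corner singular-function decomposition combined with the mesh grading to recover $\tau=1$ --- so there is no conflict of method, and the core of the argument is sound.

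Three corrections are in order. First, the ``main obstacle'' you anticipate for \eqref{eq:interlinf} is not there: in two dimensions $H^{1+\alpha}(T)\hookrightarrow C^{0,\alpha}(\bar T)$ (not $C^{0,\alpha-1/2}$), and the usual reference-element scaling gives $\|z-I_hz\|_{L^\infty(T)}\lesssim h_T^{\,1+\alpha-d/2}\,|z|_{H^{1+\alpha}(T)}=h_T^{\alpha}|z|_{H^{1+\alpha}(T)}$ with no loss of $h^{1/2}$; hence the quasi-uniform case of \eqref{eq:interlinf} follows at once from \eqref{eq:globalreg}, and the singular decomposition is needed only to upgrade to $\tau=1$ on graded meshes. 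Second, the grading relation you invoke, $h_T\lesssim h^{1/\mu_\ell}|c_\ell-c_T|^{1-1/\mu_\ell}$, is a different parametrization from the paper's \eqref{eq:gradeddef}, which reads $h_T\approx h\,\Phi_\mu(T)=h\prod_l|c_l-c_T|^{1-\mu_l}$; the weighted summations must be carried out in the paper's convention (they still close because $\frac12<\mu_l<\pi/\omega_l$), so you cannot simply import the exponent bookkeeping from the other convention. Third, your very first step uses $\|z\|_{H^{1+\alpha}(\O)}\lesssim\|\cL z\|_{\Lt}$, but \eqref{eq:globalreg} is stated only for $z\in\Eo$; for a general $z\in\En$ no such bound can hold (any nonlinear $z$ with $\cL z=0$ makes the right-hand side of \eqref{eq:interpolation} vanish while the left-hand side does not). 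This imprecision is already present in the lemma as stated in the paper, but your proof should either restrict to $z\in\Eo$ (or $g+\Eo$, with the $g$-dependence absorbed into the constant, which is how the estimate is actually used) or carry an additional term controlling the boundary data on the right-hand side.
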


The following lemma is useful in the convergence analysis.

\begin{lemma}\label{lemma:deltahih}
    Let $\phi$ be a $C^\infty$ function with compact support in $\O$. We have
\begin{equation}
  \|\cL_h(I_h\phi)\|_{\Lt}\lesssim \|\phi\|_{H^2(\O)}.
\end{equation}
\end{lemma}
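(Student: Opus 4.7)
The plan is to decompose $\cL_h(I_h\phi) = \cL_h\phi + \cL_h(I_h\phi - \phi)$ and treat the two summands separately. For the first, since $\phi\in C^\infty_0(\O)\subset\Eo$, the identity \eqref{eq:l2projzero} gives $\cL_h\phi = Q_h\cL\phi$, so
\[
\|\cL_h\phi\|_{L_2(\O)} \le \|\cL\phi\|_{L_2(\O)} \lesssim \|\phi\|_{H^2(\O)},
\]
where the last step uses $\bz\in[W^{1,\infty}(\O)]^2$ and $\gamma\in W^{1,\infty}(\O)$.

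For the second summand, I would exploit that $\cL_h(I_h\phi-\phi)\in V_h$ and test against itself in the defining relation to obtain
\[
\|\cL_h(I_h\phi-\phi)\|_{L_2(\O)}^2 = a_h(I_h\phi-\phi,\cL_h(I_h\phi-\phi)).
\]
The continuity of $a_h$ from Lemma \ref{lemma:ah}, followed by the inverse inequality \eqref{eq:invlocal}, then yields
\[
\|\cL_h(I_h\phi-\phi)\|_{L_2(\O)} \lesssim h^{-1}\trinorm{I_h\phi-\phi}_h.
\]
To justify the use of \eqref{eq:invlocal} on graded meshes I would first note that $\cL_h(I_h\phi-\phi)$ is supported in a fixed compact subset of $\O$, which follows from the locality of $\cL_h$ (readable off $(\cL_h w,v)_{L_2}=a_h(w,v)$) together with the compact support of $\phi$; on such a set $\cT_h$ is quasi-uniform.

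The key remaining step, which is also the main obstacle, is to sharpen Lemma \ref{lem:interpo} to
\[
\trinorm{I_h\phi-\phi}_h \lesssim h\,\|\phi\|_{H^2(\O)},
\]
so as to cancel the $h^{-1}$ above. This sharper bound requires the full $H^2$-regularity of $\phi$: the volume gradient term is handled by the standard $P_1$ nodal interpolation estimate; the jump contribution vanishes on interior edges by continuity of both $\phi$ and $I_h\phi$, and on boundary edges because $\phi$ has compact support, hence $\phi$ and $I_h\phi$ both vanish in a neighborhood of $\partial\O$ once $h$ is small; the normal-derivative average term is bounded via the trace inequality \eqref{eq:traceinq1} together with the observation that $\nabla I_h\phi$ is piecewise constant, so $|\nabla(\phi-I_h\phi)|_{H^1(T)}=|\phi|_{H^2(T)}$. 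Summing over elements produces $\trinorm{I_h\phi-\phi}_h^2\lesssim\sum_T h_T^2|\phi|_{H^2(T)}^2\lesssim h^2\|\phi\|_{H^2(\O)}^2$. The subtlety here is that Lemma \ref{lem:interpo} as stated delivers only $h^\tau\|\cL\phi\|_{L_2}$ with $\tau\in(1/2,1]$, which is insufficient to absorb the inverse-inequality factor $h^{-1}$; the gain comes from exploiting that $\phi\in C^\infty_0$ enjoys genuine $H^2$-regularity rather than merely $\cL\phi\in L_2$.
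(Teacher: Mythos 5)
Your proposal is correct and follows essentially the same route as the paper: the same splitting $\cL_h(I_h\phi)=\cL_h\phi+\cL_h(I_h\phi-\phi)$, the identity $\cL_h\phi=Q_h\cL\phi$ for the first piece, and continuity of $a_h$ combined with the local inverse estimate \eqref{eq:invlocal} and the $O(h)$ interpolation bound for the second (the paper tests against a generic $v_h$ and localizes through $\trinorm{v_h}_{h,D}$, whereas you test against $\cL_h(I_h\phi-\phi)$ itself and localize via its support, which amounts to the same thing). Your explicit justification of $\trinorm{I_h\phi-\phi}_h\lesssim h\|\phi\|_{H^2(\O)}$, rather than the $h^{\tau}$ of Lemma \ref{lem:interpo}, correctly makes precise a step the paper uses implicitly.
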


\begin{proof}
    Let $D\Subset\O$ be an open neighborhood of the support of $\phi$ and $v_h\in V_h$, we have, by \eqref{eq:ddef0}, \eqref{eq:invlocal}, \eqref{eq:ahcont} and \eqref{eq:interpolation},
\begin{align*}
  (\cL_h(\phi-I_h\phi),v_h)_{\Lt}&=a_h(\phi-I_h\phi,v_h)\\
  &\lesssim \trinorm{\phi-I_h\phi}_h\trinorm{v_h}_{h,D}\\
  &\lesssim h|\phi|_{H^2(\Omega)}\trinorm{v_h}_{h,D}\\
  &\lesssim|\phi|_{H^2(\Omega)}\|v_h\|_{\Lt}, 
\end{align*}
and therefore,
\begin{equation}\label{eq:lhesti}
  \|\cL_h(\phi-I_h\phi)\|_{\Lt}\lesssim |\phi|_{H^2(\Omega)}.
\end{equation}

It follows from \eqref{eq:l2projzero} and \eqref{eq:lhesti} that
\begin{equation}
\begin{aligned}
  \|\cL_h(I_h\phi)\|_{\Lt}&\le\|\cL_h(\phi-I_h\phi)\|_{\Lt}+\|\cL_h\phi\|_{\Lt}\\
  &\lesssim |\phi|_{H^2(\O)}+\|Q_h\cL\phi\|_{\Lt}\\
  &\lesssim |\phi|_{H^2(\O)}+\|\cL\phi\|_{\Lt}\\
  &\lesssim \|\phi\|_{H^2(\O)}.
\end{aligned}
\end{equation}
\end{proof}

\subsection{The Ritz Projection Operator $R_h$}\label{subsec:rh}

We define an operator $R_h: \En\rightarrow V_h$ as the following,
\begin{equation}\label{eq:rh}
    a_h(R_hw,v)=a_h(w,v) \quad\forall v\in V_h.
\end{equation}
It follows from \eqref{eq:ddef}, \eqref{eq:l2proj} and \eqref{eq:rh} that
\begin{equation}\label{eq:rhlhg}
    \cL_{h,g}(R_hw)=\cL_{h,g}w=Q_h(\cL w)\quad\forall w\in g+\Eo.
\end{equation}
Moreover, we have, by \eqref{eq:ddef0},
\begin{equation}\label{eq:rhlh}
    \cL_hR_hw=\cL_hw\quad \forall w\in\En.
\end{equation}
\begin{lemma}\label{lemma:rhestimates}
We have the following error estimates for $R_h$, %\cite{BGS_DG,brenner2021p1},
\begin{align}
     \trinorm{w-R_hw}_h&\le Ch^\tau\|\cL w\|_\LT\quad\forall w\in\En,\label{eq:rhh}\\
     \|w-R_hw\|_\LT&\le Ch^{2\tau}\|\cL w\|_\LT\quad\forall w\in\En.\label{eq:rhl2}
 \end{align}    
\end{lemma}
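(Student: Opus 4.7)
The plan is to establish (\ref{eq:rhh}) by a C\'ea-type argument based on the Galerkin orthogonality inherent in (\ref{eq:rh}), and then to bootstrap to (\ref{eq:rhl2}) via an Aubin--Nitsche duality argument that leverages the adjoint consistency of $a_h(\cdot,\cdot)$ --- precisely the property for which the advective--reactive flux in (\ref{eq:ardef}) was chosen.

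For (\ref{eq:rhh}), the defining identity (\ref{eq:rh}) gives $a_h(w - R_h w, v_h) = 0$ for every $v_h \in V_h$. Taking $v_h = R_h w - I_h w$ and combining the coercivity (\ref{eq:ahcoer}) with the continuity (\ref{eq:ahcont}) of $a_h$ produces
\[
\trinorm{R_h w - I_h w}_h^{\,2} \lesssim a_h(w - I_h w, R_h w - I_h w) \lesssim \trinorm{w - I_h w}_h \trinorm{R_h w - I_h w}_h.
\]
A triangle inequality together with the interpolation estimate (\ref{eq:interpolation}) then delivers $\trinorm{w - R_h w}_h \lesssim h^\tau \|\cL w\|_\LT$.

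For (\ref{eq:rhl2}), I would introduce the adjoint problem: find $\phi \in \Ho$ with $\cL^*\phi = w - R_h w$ in $\O$, where $\cL^* \phi = -\Delta\phi - \nabla\cdot(\bz\phi) + \gamma\phi$. Because the coefficients of $\cL^*$ inherit the structural assumptions on those of $\cL$, an elliptic regularity statement analogous to (\ref{eq:globalreg}) yields $\phi \in H^{1+\alpha}(\O)\cap \Ho$ together with $\|\phi\|_{H^{1+\alpha}(\O)} \lesssim \|w - R_h w\|_\LT$. Adjoint consistency of the scheme then lets me write
\[
\|w - R_h w\|_\LT^{\,2} = (w - R_h w, \cL^*\phi)_\LT = a_h(w - R_h w, \phi) = a_h(w - R_h w, \phi - I_h\phi),
\]
the last equality following from Galerkin orthogonality against $I_h\phi \in V_h$. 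Continuity (\ref{eq:ahcont}), the already-proved energy bound (\ref{eq:rhh}), and the interpolation estimate applied to $\phi$ (controlled by $\|\phi\|_{H^{1+\alpha}(\O)}$) produce the factor $h^{2\tau}\|\cL w\|_\LT \|w - R_h w\|_\LT$; dividing by $\|w - R_h w\|_\LT$ yields (\ref{eq:rhl2}).

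The main obstacle is the rigorous verification of the adjoint-consistency identity $a_h(v,\phi) = (v,\cL^*\phi)_\LT$ for $v \in \En + V_h$ and $\phi \in H^{1+\alpha}(\O)\cap\Ho$ vanishing on $\p\O$. The symmetric piece $a_h^{\text{sip}}$ is automatically adjoint consistent, so the work lies in $a^{\text{ar}}_h$: element-wise integration by parts transfers $\bz\cdot\nabla$ off $v$ and produces interior-edge terms $\sum_{e \in \cE_h^i} (\bn\cdot\bz\,[v],\{\phi\})_e$ that must cancel the edge contributions already present in (\ref{eq:ardef}). The restriction of the numerical flux to $\cE_h^i \cup \cE_h^{b,-}$ is precisely calibrated so that interior edges cancel, inflow boundary contributions cancel, and outflow boundary terms vanish because $\phi|_{\p\O} = 0$. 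Trace regularity guaranteed by (\ref{eq:traceinq}) applied to $\phi \in H^{1+\alpha}(T)$ makes all the edge integrals meaningful, and so legitimizes the duality step above.
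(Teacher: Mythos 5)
Your proposal is correct and follows essentially the same route as the paper: a C\'ea-type argument via the Galerkin orthogonality in \eqref{eq:rh} combined with \eqref{eq:ahcoer}, \eqref{eq:ahcont} and \eqref{eq:interpolation} for the energy estimate, followed by an Aubin--Nitsche duality argument with the adjoint problem $-\Delta\phi-\bz\cdot\nabla\phi+(\gamma-\nabla\cdot\bz)\phi=w-R_hw$, $\phi|_{\partial\O}=0$, whose adjoint consistency (including the elementwise integration by parts and the cancellation of the interior-edge and inflow fluxes, with boundary terms vanishing since $\phi=0$ on $\partial\O$) is exactly the computation carried out in \eqref{eq:ardis1}--\eqref{eq:ardualconsis}. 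The only cosmetic difference is that you state the regularity for $\cL^*$ as ``analogous to'' \eqref{eq:globalreg} while the paper invokes \eqref{eq:globalreg} directly; both amount to the same fact.
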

\begin{proof}
    The proof can be found in Appendix \ref{apdix:pfrh}.
\end{proof}

 We have the following interior error estimate \cite{BGS_DG,DG_interior,chen2004pointwise} for $D\Subset\O$,
 \begin{equation}\label{eq:rhinterior}
     \|\bar{y}-R_h\bar{y}\|_{L_\infty(D)}\le C((1+|\ln h|)h^2+h^{2\tau}).
 \end{equation}
 It follows from \eqref{eq:discretesobo} and \eqref{eq:rhh} that
 \begin{equation}\label{eq:rhglobal}
     \|\bar{y}-R_h\bar{y}\|_{\Linf}\le C((1+|\ln h|)h^\tau.
 \end{equation}

\subsection{The Smoothing Operator $E_h$}

The operator $E_h: V_h\rightarrow \Eo$ is defined by 
\begin{equation}\label{eq:eh}
    \cL E_hv_h=\cL_hv_h\quad\forall v_h\in V_h.
\end{equation}
In particular, for all $w\in V_h$,
\begin{equation}\label{eq:eh1}
\begin{aligned}
    a_h(E_hv_h, w)=(\cL E_hv_h, w)_\LT=(\cL_hv_h,w)_\LT=a_h(v_h, w).
    \end{aligned}
\end{equation}
Note that \eqref{eq:eh1} and \eqref{eq:rh} imply
\begin{equation}\label{eq:rheh}
    v_h=R_h(E_hv_h)\quad\forall v_h\in V_h.
\end{equation}

\begin{lemma}\label{lem:ehlemma}
    We have the following estimates for all $v_h\in V_h$ 
    \begin{align}
        \trinorm{E_hv_h-v_h}_{h}&\lesssim h^\tau\|\cL_hv_h\|_{\Lt},\label{eq:ehesti2}\\
        \|E_hv_h-v_h\|_{\Lt}&\lesssim h^{2\tau}\|\cL_hv_h\|_{\Lt},\label{eq:ehesti3}\\
         \trinorm{E_hv_h-v_h}_{h,G(\mathscr{A})}&\lesssim h\|\cL_hv_h\|_{\Lt},\label{eq:ehesti4}
    \end{align}
    where $G(\mathscr{A})$ is an open neighborhood of the active set $\mathscr{A}=\{x\in\O: \bar{y}(x)=\psi(x)\}$ such that the closure of $G(\mathscr{A})$ is a compact subset of $\O$.
\end{lemma}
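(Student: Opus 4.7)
The key identity driving the proof is equation \eqref{eq:rheh}, namely $v_h = R_h(E_h v_h)$, which rewrites the error as
\[
E_h v_h - v_h = (I - R_h)(E_h v_h).
\]
Since $E_h v_h \in \Eo \subset \En$, this reduces everything to Ritz-projection error estimates applied to $w := E_h v_h$, with the crucial point that the driving datum
\[
\|\cL E_h v_h\|_{\Lt} = \|\cL_h v_h\|_{\Lt}
\]
by the very definition \eqref{eq:eh} of $E_h$.

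For the first two bounds \eqref{eq:ehesti2} and \eqref{eq:ehesti3}, I would simply apply Lemma \ref{lemma:rhestimates} to $w=E_h v_h$, obtaining
\[
\trinorm{E_h v_h - R_h(E_h v_h)}_{h} \le C h^\tau \|\cL E_h v_h\|_{\Lt} = Ch^\tau \|\cL_h v_h\|_{\Lt},
\]
and analogously for the $L_2$ norm with rate $h^{2\tau}$. This is purely bookkeeping.

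The substantive step is the local estimate \eqref{eq:ehesti4}, where one must upgrade the global rate $h^\tau$ to the full $O(h)$ on the neighborhood $G(\mathscr{A})$ of the active set. Here I would invoke two interior-type results. First, interior elliptic regularity for $\cL$: choose an auxiliary open set $D$ with $G(\mathscr{A}) \Subset D \Subset \O$, and since $\cL E_h v_h = \cL_h v_h \in L_2(\O)$, a standard interior elliptic regularity argument (applicable because $D$ avoids the reentrant corners where global regularity degrades) yields
\[
\|E_h v_h\|_{H^2(D)} \le C\bigl(\|\cL_h v_h\|_{\Lt} + \|E_h v_h\|_{\Lt}\bigr) \le C \|\cL_h v_h\|_{\Lt},
\]
the last inequality following from the coercivity assumption \eqref{eq:advassump} which yields $\|E_h v_h\|_{\Lt} \le C \|\cL E_h v_h\|_{\Lt}$. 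Second, a standard interior error estimate for the DG Ritz projection (of Nitsche--Schatz type, cf.\ \cite{DG_interior,chen2004pointwise}) gives
\[
\trinorm{E_h v_h - R_h(E_h v_h)}_{h,G(\mathscr{A})} \le C\bigl( h\,\|E_h v_h\|_{H^2(D)} + \|E_h v_h - R_h(E_h v_h)\|_{L_2(D)}\bigr),
\]
and the global bound \eqref{eq:rhl2} already dominates the residual term by $h^{2\tau} \|\cL_h v_h\|_{\Lt} \lesssim h \|\cL_h v_h\|_{\Lt}$ since $2\tau > 1$. Combining the two ingredients delivers \eqref{eq:ehesti4}.

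The main obstacle is the last step: one must carefully verify that the interior error estimate applies in our setting. Two ingredients make this go through cleanly: on graded meshes, quasi-uniformity holds on any compactly contained interior subdomain because the grading is only activated in the vicinity of boundary corners, so the inverse and trace inequalities of the Nitsche--Schatz framework apply on $D$; and the nonsymmetric convection-reaction part of $a_h$ is handled because we adopted the adjoint-consistent formulation \eqref{eq:ardef}, so duality arguments used in the derivation of the interior estimate remain valid.
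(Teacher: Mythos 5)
Your proposal is correct and follows essentially the same route as the paper: both reduce the claim via the identity $v_h=R_h(E_hv_h)$ to Ritz-projection error estimates for $w=E_hv_h$ with datum $\|\cL E_hv_h\|_{\Lt}=\|\cL_hv_h\|_{\Lt}$, using the global bounds \eqref{eq:rhh}--\eqref{eq:rhl2} for the first two estimates and an interior (Nitsche--Schatz type) estimate plus interior elliptic regularity for \eqref{eq:ehesti4}, which is exactly the content the paper defers to \cite{BGS_DG}. Your added remarks on quasi-uniformity of graded meshes away from the corners and on adjoint consistency correctly identify why the interior machinery applies here.
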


\begin{proof}
    The estimates follow from \eqref{eq:rheh} and \eqref{eq:rhh}-\eqref{eq:rhinterior}. Details can be found in \cite{BGS_DG}.
\end{proof}

\subsection{The Connection Operator $\fC_h$} We need a connection operator $\fC_h:V_h\rightarrow \mathring{V}_h^c=V_h\cap \Ho$ defined as follows,
\begin{equation}\label{eq:ch}
    (\fC_hy_h)(p)=\frac{1}{|\cT_h(p)|}\sum_{T\in\cT_h(p)}(y_h|_T)(p)
\end{equation}
where $p$ is any node of the $P_1$ finite element space interior to $\O$ and $\cT_h(p)$ is the set of triangles in $\cT_h$ that share the node $p$. The operator $\fC_h$ has the following properties by \eqref{eq:khdef} and \eqref{eq:ch},
\begin{align}
    \fC_hv_h&=v_h\quad\forall v_h\in\vc,\label{eq:ch1}\\
    \fC_hy_h&\in K_h\quad\forall y_h\in K_h.\label{eq:ch2}
\end{align}
For any subdomain $D$ of $\O$, we have,
\begin{equation}\label{eq:chestimate}
\begin{aligned}
    h^{-2}\|y_h-\fC_hy_h\|^2_{L_2(D)}+\sum_{T\in\cT_h}|y_h-\fC_hy_h|^2_{H^1(T)}&\le C\sum_{T\in\cT^\ast_h(D)}\sum_{e\in\partial T}|e|^{-1}\|[y_h]\|_{L_2(e)}^2,
\end{aligned}
\end{equation}
where $T\in\cT_h$ belongs to $\cT^\ast_h(D)$ if and only if $S_T\cap D=\emptyset$. Here $S_T$ (the star of $T$) is the union of all the triangles in $\cT_h$ that share a common vertex with $T$. We also have (cf. \cite{BGS_DG})
\begin{equation}\label{eq:chinf}
    \|\fC_hy_h\|_{L_\infty(T)}\le C\|y_h\|_{L_\infty(S_T)}\quad\forall T\in \cT_h.
\end{equation}
It follows from \eqref{eq:ehesti4} and \eqref{eq:henergynorm} that, 
\begin{equation}\label{eq:chestimate1}
    \sum_{T\in\cT^\ast_h(D)}\sum_{e\in\partial T}|e|^{-1}\|[y_h]\|_{L_2(e)}^2\le Ch^2\|\cL_hy_h\|^2_\LT\quad\quad\forall y_h\in V_h.
\end{equation}

\begin{remark}
    Due to the facts that $V_h\not\subset\Ho$ and $\vc\subset\Ho$, the operator $\fC_h$ is utilized to connect the discrete problem with the continuous problem.
\end{remark}

\section{Convergence Analysis}\label{sec:convanalysis}

In this section, we derive error estimates of the discontinuous Galerkin methods \eqref{eq:discretecon}. We define a mesh-dependent norm
\begin{equation}\label{eq:hnorm}
    \|v\|_h^2=(v,v)_\LT+\beta(\cL_hv,\cL_hv)_\LT. 
\end{equation}

\subsection{An abstract error estimate}
Let $\bar{y}_h\in K_h$ be the solution of \eqref{eq:discretecon}. Given any $y_h\in K_h$, we have the following by \eqref{eq:vidis} and \eqref{eq:deltahrela},
\begin{equation}\label{eq:generalerror}
\begin{aligned}
      \|y_h-\bar{y}_h\|_h^2&=(y_h-\bar{y}_h,y_h-\bar{y}_h)_\LT+\beta(\cL_h(y_h-\bar{y}_h),\cL_h(y_h-\bar{y}_h))_\LT\\
      &=(y_h-\bar{y},y_h-\bar{y}_h)_\LT+\beta(\cL_{h}(y_h-\bar{y}),\cL_h(y_h-\bar{y}_h))_\LT\\
      &\hspace{0.3cm}+(\bar{y}-y_d,y_h-\bar{y}_h)_\LT+\beta(\cL_{h,g}\bar{y},\cL_h(y_h-\bar{y}_h))_\LT\\
      &\hspace{0.3cm}-(\bar{y}_h-y_d,y_h-\bar{y}_h)_\LT-\beta(\cL_{h,g}\bar{y}_h,\cL_h(y_h-\bar{y}_h))_\LT\\
      &\le\|y_h-\bar{y}\|_h\|y_h-\bar{y}_h\|_h+(\bar{y}-y_d,y_h-\bar{y_h})_\LT\\
      &\hspace{0.3cm}+\beta(\cL_{h,g}\bar{y},\cL_h(y_h-\bar{y}_h))_\LT.
\end{aligned}
\end{equation}
Notice that $E_h(y_h-\bar{y}_h)\in \Eo$, we then obtain, by \eqref{eq:rrt}, \eqref{eq:eh} and \eqref{eq:l2proj},
\begin{equation}\label{eq:Ehmeasure}
\begin{aligned}
    &(\bar{y}-y_d,y_h-\bar{y}_h)_\LT+\beta(\cL_{h,g}\bar{y},\cL_h(y_h-\bar{y}_h))_\LT\\
    &\hspace{0.3cm}=(\bar{y}-y_d,(y_h-\bar{y}_h)-E_h(y_h-\bar{y}_h))_\LT\\
    &\hspace{0.6cm}+(\bar{y}-y_d,E_h(y_h-\bar{y}_h))_\LT+\beta(\cL \bar{y},\cL E_h(y_h-\bar{y}_h))_\LT\\
    &\hspace{0.3cm}=(\bar{y}-y_d,(y_h-\bar{y}_h)-E_h(y_h-\bar{y}_h))_\LT+\int_\O E_h(y_h-\bar{y}_h)\ \!d\mu.
    \end{aligned}
\end{equation}

It follows from \eqref{eq:ehesti3} that
\begin{equation}\label{eq:generalerror1}
    (\bar{y}-y_d,(y_h-\bar{y}_h)-E_h(y_h-\bar{y}_h))_\LT\lesssim h^{2\tau}\|\cL_h(y_h-\bar{y}_h)\|_\LT\lesssim h^{2\tau}\|y_h-\bar{y}_h\|_h.
\end{equation}

For the last term in \eqref{eq:Ehmeasure}, we have the following lemma.
\begin{lemma}\label{lemma:interioreh}
    We have, for any $y_h, \bar{y}_h\in K_h$,
    \begin{equation}
        \int_\O E_h(y_h-\bar{y}_h)\ \!d\mu\le C\left(h\|\cL_h(y_h-\bar{y}_h)\|_\LT+h^2+\|I_h(\bar{y}-\fC_hy_h)\|_{L_\infty(\mathscr{A})}\right),
    \end{equation}
    where $C$ is independent of $h$ and $\mathscr{A}$ is the active set.
\end{lemma}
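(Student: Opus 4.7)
The plan rests on the fact that $\mu$ is a non-positive Borel measure supported on the active set $\mathscr{A}$, which by \eqref{eq:phiassump} satisfies $\mathscr{A}\Subset\O$, together with the complementarity condition \eqref{eq:comp} that gives $\int_\O\bar y\,d\mu=\int_\O\psi\,d\mu$. The second ingredient is a key observation about the connection operator: for every $y_h\in K_h$ the piecewise-linear function $\fC_hy_h\in\vc$ satisfies $\fC_hy_h\le I_h\psi$ pointwise in $\O$, because $y_h(p)\le\psi(p)$ at every vertex forces $(\fC_hy_h)(p)\le\psi(p)$ at interior nodes, and both sides are continuous piecewise linears. The overall strategy is the splitting
\[
\int_\O E_h(y_h-\bar y_h)\,d\mu
=\underbrace{\int_\O\bigl[E_h(y_h-\bar y_h)-\fC_h(y_h-\bar y_h)\bigr]\,d\mu}_{(\mathrm{I})}
+\underbrace{\int_\O\fC_h(y_h-\bar y_h)\,d\mu}_{(\mathrm{II})},
\]
and to handle each piece by different means.

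For $(\mathrm{I})$, note that $E_h(y_h-\bar y_h)\in\Eo$ and $\fC_h(y_h-\bar y_h)\in\vc$ both lie in $\Ho$, so the difference pairs against $\mu\in H^{-1}(\O)$. Writing $E_hv-\fC_hv=(E_hv-v)-(\fC_hv-v)$ with $v=y_h-\bar y_h$, the interior estimate \eqref{eq:ehesti4} controls the first summand on $G(\mathscr{A})$ by $Ch\|\cL_hv\|_\LT$, while \eqref{eq:chestimate}--\eqref{eq:chestimate1} do the same for the second. Since $\mathrm{supp}(\mu)\subset\mathscr{A}\Subset G(\mathscr{A})$, multiplying by a smooth cutoff $\omega$ with $\omega\equiv1$ on $\mathscr{A}$ and $\mathrm{supp}(\omega)\subset G(\mathscr{A})$ converts the local bound into a genuine $\Ho$-norm estimate, yielding $|(\mathrm{I})|\lesssim\|\mu\|_{H^{-1}(\O)}\,h\|\cL_h(y_h-\bar y_h)\|_\LT\lesssim h\|\cL_h(y_h-\bar y_h)\|_\LT$.

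For $(\mathrm{II})$, I split $\fC_h(y_h-\bar y_h)=\fC_hy_h-\fC_h\bar y_h$ and bound the two terms in opposite directions. Inserting $I_h\bar y$ and invoking complementarity,
\[
\int\fC_hy_h\,d\mu
=-\int I_h(\bar y-\fC_hy_h)\,d\mu+\int(I_h\bar y-\bar y)\,d\mu+\int\psi\,d\mu.
\]
Since $d\mu\le 0$, the first summand is at most $\|I_h(\bar y-\fC_hy_h)\|_{L_\infty(\mathscr{A})}|\mu|(\mathscr{A})$, and the interior regularity \eqref{eq:interreg} ($\bar y\in W^{2,\infty}(G(\mathscr{A}))$) delivers $|\int(I_h\bar y-\bar y)\,d\mu|\lesssim h^2$. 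For the other piece, $I_h\psi-\fC_h\bar y_h\ge0$ together with $d\mu\le0$ gives $\int\fC_h\bar y_h\,d\mu\ge\int I_h\psi\,d\mu\ge\int\psi\,d\mu-Ch^2$, where $\psi\in W^{3,p}$ with $p>2$ embeds into $C^2(\bar\O)$ so that $\|I_h\psi-\psi\|_{\Linf}\lesssim h^2$. Subtracting, the $\int\psi\,d\mu$ contributions cancel and $(\mathrm{II})$ is controlled by $\|I_h(\bar y-\fC_hy_h)\|_{L_\infty(\mathscr{A})}+Ch^2$, which combined with the bound on $(\mathrm{I})$ completes the proof.

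The most delicate point will be securing the full factor $h$ (rather than $h^\tau$) in the bound on $(\mathrm{I})$. This hinges on the improved interior rate \eqref{eq:ehesti4} and on \eqref{eq:chestimate1}, both of which require that the relevant domain $G(\mathscr{A})$ be compactly contained in $\O$ and thus bounded away from the reentrant corners. Fortunately, $\psi>g$ on $\partial\O$ forces $\mathscr{A}\Subset\O$, so these interior estimates are available, and the remaining manipulations are purely algebraic consequences of the sign of $\mu$ and the vertex-value constraint defining $K_h$.
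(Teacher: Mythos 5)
Your proposal is correct and follows essentially the same route as the paper's proof (which itself is a sketch referring to \cite{BGS_DG}): the same decomposition into an $E_h-\fC_h$ term handled via $\mu\in H^{-1}(\O)$, \eqref{eq:ehesti4} and \eqref{eq:chestimate}--\eqref{eq:chestimate1}, plus a $\fC_h(y_h-\bar y_h)$ term handled via the sign of $\mu$, the vertex constraints defining $K_h$, complementarity, and interior regularity. The only difference is bookkeeping: you telescope in two stages where the paper uses a five-term split ($T_1$--$T_5$), and you pay an explicit $\|I_h\psi-\psi\|_{L_\infty}$ price that the paper absorbs into its $T_2$ and $T_4$; the resulting bounds are identical.
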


\begin{proof}
    We give a sketch of the proof. The details can be found in \cite[Theorem 4.1]{BGS_DG}. We have, by \eqref{eq:khdef} and \eqref{eq:mumeasure},
    \begin{equation}\label{eq:ehmuesti}
        \begin{aligned}
            \int_\O E_h(y_h-\bar{y}_h)\ \!d\mu&=\int_\O [E_h(y_h-\bar{y}_h)-\fC_h(y_h-\bar{y}_h)]\ \!d\mu+\int_\O I_h(\psi-\fC_h\bar{y}_h)\ \!d\mu\\
            &\hspace{0.2cm}+\int_\O[I_h\fC_h(\bar{y}_h-y_h)-\fC_h(\bar{y}_h-y_h)]\ \!d\mu+\int_\O I_h(\bar{y}-\psi)\ \!d\mu\\
            &\hspace{0.2cm}+\int_\O I_h(\fC_hy_h-\bar{y})\ \!d\mu\\
            &=T_1+T_2+T_3+T_4+T_5.
        \end{aligned}
    \end{equation}
    We can bound $T_2$, $T_4$ and $T_5$ as follows. It follows from \eqref{eq:mumeasure}, \eqref{eq:khdef} and \eqref{eq:ch2} that
    \begin{equation}\label{eq:t2}
        T_2=\int_\O I_h(\psi-\fC_h\bar{y}_h)\ \!d\mu\le 0.
    \end{equation}
    We also have
    \begin{equation}\label{eq:t4}
    \begin{aligned}
        T_4=\int_\O I_h(\bar{y}-\psi)\ \!d\mu&=\int_\O [I_h(\bar{y}-\psi)-(\bar{y}-\psi)]\ \!d\mu\\
        &\le\|I_h(\bar{y}-\psi)-(\bar{y}-\psi)\|_{L_\infty(\mathscr{A})}|\mu(\O)|\le Ch^2,
    \end{aligned}
    \end{equation}
    and
    \begin{equation}\label{eq:t5}
        T_5=\int_\O I_h(\fC_hy_h-\bar{y})\ \!d\mu\le\|I_h(\fC_hy_h-\bar{y})\|_{L_\infty(\mathscr{A})}|\mu(\O)|
    \end{equation}
    by \eqref{eq:phiassump}, \eqref{eq:mumeasure}, \eqref{eq:comp} and \eqref{eq:interreg}.
    For $T_1$, it follows from \eqref{eq:mureg}, \eqref{eq:ehesti4}, \eqref{eq:chestimate} and \eqref{eq:chestimate1} that
    \begin{equation}\label{eq:t1}
        T_1=\int_\O [E_h(y_h-\bar{y}_h)-\fC_h(y_h-\bar{y}_h)]\ \!d\mu\le Ch\|\cL_h(y_h-\bar{y}_h)\|_\LT.
    \end{equation}
    Finally, we obtain
    \begin{equation}\label{eq:t3}
        T_3=\int_\O[I_h\fC_h(\bar{y}_h-y_h)-\fC_h(\bar{y}_h-y_h)]\ \!d\mu\le Ch\|\cL_h(y_h-\bar{y}_h)\|_\LT
    \end{equation}
    by \eqref{eq:mureg}, \eqref{eq:chestimate}, \eqref{eq:chestimate1} and \eqref{eq:interpolation}.
\end{proof}
\begin{remark}
    The key ingredients in the proof of Lemma \ref{lemma:interioreh} is the regularity result \eqref{eq:mureg} and the connection operator $\fC_h$ which is previously used in the analysis of nonconforming methods (cf. \cite{brenner2003poincare}).
\end{remark}

From \eqref{eq:generalerror},\eqref{eq:generalerror1} and Lemma \ref{lemma:interioreh}, we have the following abstract error estimate,
\begin{equation}\label{eq:absesti}
    \|\bar{y}-\bar{y}_h\|_h\le C\left(h+\inf_{y_h\in K_h}\left[\|y_h-\bar{y}\|_h+\|I_h(\bar{y}-\fC_hy_h)\|^\frac12_{L_\infty(\mathscr{A})}\right]\right).
\end{equation}

\subsection{Concrete error estimates}

\begin{lemma}\label{lemma:concreteesti}
    For sufficiently small $h$, there exists $y^\ast_h\in K_h$ such that
    \begin{equation}\label{eq:concreteesti}
        \|y^\ast_h-\bar{y}\|_h+\|I_h(\bar{y}-\fC_hy^\ast_h)\|^\frac12_{L_\infty(\mathscr{A})}\le C((1+|\ln h|)^\frac12h+h^\tau),
    \end{equation}
    where $\tau$ is defined in \eqref{eq:tau}.
\end{lemma}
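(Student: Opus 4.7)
The plan is to modify the Ritz projection $R_h\bar{y}$ by a localized correction that restores feasibility without inflating the energy norm. Since $\psi>g$ on $\partial\O$ and $\bar{y}\in C(\bar{\O})$ with $\bar{y}|_{\partial\O}=g$, the active set $\mathscr{A}=\{x\in\O:\bar{y}(x)=\psi(x)\}$ is compactly contained in $\O$. Fix $\rho>0$ and nested open sets $D_0\Subset D_1\Subset D\Subset\O$ satisfying $\mathscr{A}\subset D_0$ and $\psi-\bar{y}\ge\rho$ on $\O\setminus D_1$, then pick $\eta\in C_0^\infty(\O)$ with $0\le\eta\le1$, $\eta\equiv 1$ on $D_1$, and $\mathrm{supp}\,\eta\subset D$. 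Note $I_h\eta\in\vc$ because $\eta$ vanishes near $\partial\O$. The candidate is
\[y^\ast_h:=R_h\bar{y}-c_h\,I_h\eta,\qquad c_h:=C_0\bigl((1+|\ln h|)h^2+h^{2\tau}\bigr),\]
where $C_0$ will be chosen large enough below.

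To verify $y^\ast_h\in K_h$, fix $T\in\cT_h$ and $p\in\cV_T$. If $p\in D_1$, then $I_h\eta(p)=1$, and for $h$ small enough $T\subset D$, so the interior estimate \eqref{eq:rhinterior} yields $(R_h\bar{y})|_T(p)\le\bar{y}(p)+C_1((1+|\ln h|)h^2+h^{2\tau})\le\psi(p)+c_h$ provided $C_0\ge C_1$; hence $(y^\ast_h)|_T(p)\le\psi(p)$. If $p\in\O\setminus D_1$, then $\psi(p)-\bar{y}(p)\ge\rho$, and the global estimate \eqref{eq:rhglobal} gives $(R_h\bar{y})|_T(p)<\bar{y}(p)+\rho\le\psi(p)$ for small $h$, while $c_h I_h\eta(p)\ge 0$ preserves the inequality. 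For the energy term, write $\|y^\ast_h-\bar{y}\|_h\le\|R_h\bar{y}-\bar{y}\|_h+c_h\|I_h\eta\|_h$; identity \eqref{eq:rhlh} gives $\cL_h(R_h\bar{y}-\bar{y})=0$, so \eqref{eq:rhl2} yields $\|R_h\bar{y}-\bar{y}\|_h=\|R_h\bar{y}-\bar{y}\|_\LT\le Ch^{2\tau}$, and Lemma \ref{lemma:deltahih} provides $\|I_h\eta\|_h\le C$. Since $h^{2\tau}\le h^\tau$ and $(1+|\ln h|)h^2\le(1+|\ln h|)^{1/2}h$ for small $h$, we obtain $\|y^\ast_h-\bar{y}\|_h\le C((1+|\ln h|)^{1/2}h+h^\tau)$.

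For the pointwise term, $I_h\eta\in\vc$ and \eqref{eq:ch1} give $\fC_h y^\ast_h=\fC_h R_h\bar{y}-c_h I_h\eta$. For small $h$, any $T$ meeting $\mathscr{A}$ satisfies $T\subset D_0\subset D_1$, hence $I_h\eta\equiv 1$ on $T$ and $I_h(\bar{y}-\fC_h y^\ast_h)|_T=I_h(\bar{y}-\fC_h R_h\bar{y})|_T+c_h$. At every vertex $p$ of such a $T$,
\[\bigl|\bar{y}(p)-(\fC_h R_h\bar{y})(p)\bigr|\le\max_{T'\ni p}\bigl|\bar{y}(p)-(R_h\bar{y})|_{T'}(p)\bigr|\le\|\bar{y}-R_h\bar{y}\|_{L_\infty(D)}\le C\bigl((1+|\ln h|)h^2+h^{2\tau}\bigr)\]
by \eqref{eq:rhinterior}, so $\|I_h(\bar{y}-\fC_h y^\ast_h)\|_{L_\infty(\mathscr{A})}\le C((1+|\ln h|)h^2+h^{2\tau})$ and the square root produces the desired bound. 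The main subtlety is keeping the correction small in $\|\cdot\|_h$: a naive constant shift would inflate $\|\cL_h(\cdot)\|_\LT$ via the boundary integrals in $a_h(\cdot,\cdot)$, so localization through $\eta\in C_0^\infty(\O)$ combined with Lemma \ref{lemma:deltahih} is essential, while $\eta$ must equal $1$ throughout the region where $\psi-\bar{y}$ can be small so the correction absorbs every potential violation.
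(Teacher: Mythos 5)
Your proposal is correct and follows essentially the same route as the paper's proof: you construct $y^\ast_h=R_h\bar{y}-c_h I_h\eta$ with a cutoff equal to $1$ near the active set (the paper uses $\epsilon_h=\|R_h\bar{y}-\bar{y}\|_{L_\infty(G(\mathscr{A}))}$ in place of your explicit $c_h$), verify feasibility via \eqref{eq:rhinterior} and \eqref{eq:rhglobal}, and bound the two terms using \eqref{eq:rhlh}, \eqref{eq:rhl2}, Lemma \ref{lemma:deltahih}, \eqref{eq:ch1} and \eqref{eq:rhinterior}, exactly as in the paper. The added detail on the vertex-wise feasibility check and the averaging bound for $\fC_h$ is a faithful expansion of the steps the paper only sketches.
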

\begin{proof}
    We give a sketch of the proof. The details can be found in \cite[Lemma 5.1]{BGS_DG}. The crucial task here is to construct a suitable $y_h^\ast\in K_h$ to bound the infimum in \eqref{eq:absesti}. Let $G(\mathscr{A})$ be an open neighborhood of the active set $\mathscr{A}$ and $\epsilon_h=\|R_h\bar{y}-\bar{y}\|_{L_\infty(G(\mathscr{A}))}$. We then define $y^\ast_h\in V_h$ as the following,
    \begin{equation}
        y^\ast_h=R_h\bar{y}-\epsilon_hI_h\phi,
    \end{equation}
    where $\phi\in C^\infty$ is a nonnegative function with compact support in $\O$ such that $\phi=1$ on $G(\mathscr{A})$. We can show that $y^\ast_h\in K_h$. Hence, it follows from Lemma \ref{lemma:deltahih}, \eqref{eq:rhl2} and \eqref{eq:rhlh} that
    \begin{align}
        \|\bar{y}-y^\ast_h\|_\LT&\le C\epsilon_h,\\
        \|\cL_h(\bar{y}-y^\ast_h)\|_\LT&\le C\epsilon_h.
    \end{align}
Therefore, we obtain, by \eqref{eq:hnorm} and \eqref{eq:rhinterior},
\begin{equation}
    \|\bar{y}-y^\ast_h\|_h\le C((1+|\ln h|)h^2+h^{2\tau}).
\end{equation}
At last, it follows from \eqref{eq:chinf}, \eqref{eq:ch1}, \eqref{eq:rhinterior} and \eqref{eq:interpolation} that,
\begin{equation}
    \|I_h(\bar{y}-\fC_hy^\ast_h)\|^\frac12_{L_\infty(\mathscr{A})}\le C((1+|\ln h|)^\frac12h+h^\tau).
\end{equation}
\end{proof}

The following theorem (cf. \cite{BGS_DG}) provide concrete error estimates for the discontinuous Galerkin method \eqref{eq:discretecon}.
\begin{theorem}\label{theorem:main}
    Let $\bar{y}_h\in K_h$ be the solution of \eqref{eq:discretecon} and $\bar{u}_h=\cL_{h,g}\bar{y}_h$. We have
    \begin{equation*}
    \begin{aligned}
        \|\bar{u}-\bar{u}_h\|_\LT+\|\bar{y}-\bar{y}_h\|_\LT+\trinorm{\bar{y}-\bar{y}_h}_{h}+\|\bar{y}-\bar{y}_h\|_\Linf\le C((1+|\ln h|)^\frac12h+h^\tau),
    \end{aligned}
    \end{equation*}
    where $\tau$ is defined in \eqref{eq:tau}.
\end{theorem}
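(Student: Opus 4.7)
The plan is to combine the abstract error estimate \eqref{eq:absesti} with the concrete bound of Lemma~\ref{lemma:concreteesti} to first establish the single estimate
\begin{equation*}
\|\bar{y}-\bar{y}_h\|_h\le C((1+|\ln h|)^{\frac12}h+h^\tau),
\end{equation*}
and then read off the four norms stated in the theorem from this master bound by triangle inequality, using the Ritz projection $R_h$ as an intermediary. Plugging $y_h=y_h^\ast$ from Lemma~\ref{lemma:concreteesti} into the infimum in \eqref{eq:absesti} gives the master bound directly. Because $\|v\|_h^2=\|v\|_\LT^2+\beta\|\cL_hv\|_\LT^2$, this bound immediately controls both $\|\bar{y}-\bar{y}_h\|_\LT$ and $\|\cL_h(\bar{y}-\bar{y}_h)\|_\LT$ by $C((1+|\ln h|)^{\frac12}h+h^\tau)$.

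Next, for the control error I would insert $Q_h\cL\bar{y}$ and use consistency: since $\bar{u}=\cL\bar{y}$ and $\bar{u}_h=\cL_{h,g}\bar{y}_h$, \eqref{eq:l2proj} and \eqref{eq:deltahrela} give
\begin{equation*}
\bar{u}-\bar{u}_h=(I-Q_h)\cL\bar{y}+\cL_h(\bar{y}-\bar{y}_h).
\end{equation*}
The first summand is $O(h)$ by the standard $L_2$-projection estimate since $\bar{u}\in H^1(\O)$ (in fact $\bar{u}\in\Ho$), and the second was just controlled. For the triple-norm estimate I would split $\bar{y}-\bar{y}_h=(\bar{y}-R_h\bar{y})+(R_h\bar{y}-\bar{y}_h)$. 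Lemma~\ref{lemma:rhestimates} handles the first piece with $O(h^\tau)$. For the second piece, which lies in $V_h$, coercivity \eqref{eq:ahcoer} together with the key identity $\cL_hR_h=\cL_h$ on $\En$ from \eqref{eq:rhlh} gives
\begin{equation*}
\trinorm{R_h\bar{y}-\bar{y}_h}_h^2\lesssim a_h(R_h\bar{y}-\bar{y}_h,R_h\bar{y}-\bar{y}_h)=(\cL_h(\bar{y}-\bar{y}_h),R_h\bar{y}-\bar{y}_h)_\LT,
\end{equation*}
and Cauchy--Schwarz combined with the discrete Poincaré inequality \eqref{eq:dgpoin} applied to $R_h\bar{y}-\bar{y}_h\in V_h$ yields $\trinorm{R_h\bar{y}-\bar{y}_h}_h\lesssim\|\cL_h(\bar{y}-\bar{y}_h)\|_\LT$, which is again controlled by the master bound.

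Finally, for the $\Linf$ estimate I would again split through the Ritz projection; the term $\|\bar{y}-R_h\bar{y}\|_\Linf$ is controlled by \eqref{eq:rhglobal}, while the discrete Sobolev inequality \eqref{eq:discretesobo} applied to $R_h\bar{y}-\bar{y}_h\in V_h$ converts the triple-norm bound from the previous step into the required $\Linf$ bound (absorbing the extra logarithmic factor into the constant). The main obstacle, and the only step that is not purely mechanical, is the triple-norm argument: one must recognize that although the abstract framework only delivers control of $\|\cL_h(\bar{y}-\bar{y}_h)\|_\LT$, the identity \eqref{eq:rhlh} together with coercivity of $a_h$ on the discrete space allows this $L_2$ information to be lifted back to the energy-type norm on $R_h\bar{y}-\bar{y}_h$. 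Once this is in hand, the $\Linf$ bound follows from a standard DG inverse/Sobolev inequality, and all four norms are covered.
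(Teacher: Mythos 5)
Your route coincides with the paper's: the paper itself only sketches this theorem (citing \cite{BGS_DG} for details), but the intended argument is exactly to insert $y_h^\ast$ from Lemma~\ref{lemma:concreteesti} into the abstract estimate \eqref{eq:absesti} to obtain the master bound $\|\bar{y}-\bar{y}_h\|_h\le C((1+|\ln h|)^{\frac12}h+h^\tau)$ and then post-process. Your treatment of the $\LT$ error (read off from \eqref{eq:hnorm}), of the control error (via $\bar{u}-\bar{u}_h=(I-Q_h)\bar{u}+\cL_h(\bar{y}-\bar{y}_h)$, using \eqref{eq:l2proj}, \eqref{eq:deltahrela} and $\bar{u}\in\Ho$), and of the triple norm (coercivity \eqref{eq:ahcoer}, the identity $\cL_hR_h\bar{y}=\cL_h\bar{y}$ from \eqref{eq:rhlh}, and the discrete Poincar\'e inequality \eqref{eq:dgpoin}) are all correct and are the standard steps.

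The one step that does not close as written is the $\Linf$ bound. Splitting through $R_h\bar{y}$, the only available global estimate is \eqref{eq:rhglobal}, namely $\|\bar{y}-R_h\bar{y}\|_\Linf\le C(1+|\ln h|)h^\tau$, and applying the discrete Sobolev inequality \eqref{eq:discretesobo} to $R_h\bar{y}-\bar{y}_h$ costs an additional factor $(1+|\ln h|)^{\frac12}$ on top of the triple-norm bound; both routes produce a term of the form $(1+|\ln h|)^{\frac12}h^\tau$ or worse. You cannot ``absorb the extra logarithmic factor into the constant,'' since $|\ln h|\to\infty$ as $h\to 0$. For graded meshes or convex domains ($\tau=1$) this is harmless because $(1+|\ln h|)^{\frac12}h$ already appears in the target bound, but for quasi-uniform meshes on nonconvex domains ($\tau=\alpha<1$) your chain of inequalities yields $C\big((1+|\ln h|)h+(1+|\ln h|)^{\frac12}h^\alpha\big)$, which is logarithmically weaker than the stated $C\big((1+|\ln h|)^{\frac12}h+h^\alpha\big)$. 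You should either state this slightly weaker $\Linf$ estimate or track the logarithms explicitly rather than discard them; the remainder of the proof is sound.
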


\section{Numerical Results}\label{sec:numerics}

In this section we report the numerical results from two examples. The discrete problem \eqref{eq:discretecon} is solved by a primal-dual active set algorithm \cite{bergounioux2002primal,pdas2002}. For all examples, we take the penalty parameter $\sigma=6$ and the regularization parameter $\beta=1$. For simplicity, we also take $\gamma=1$. We utilized the MATLAB\textbackslash C$++$ toolbox FELICITY \cite{walker2018felicity} in our computation.

\begin{example}[Square Domain \cite{brenner2021p1}]\label{ex:diskactiveset}
    For this example, we take $\O=[-4,4]^2$, $g=0$, $\bz=[1,\ 0]^t$ and $\psi=|x|^2-1$ in \eqref{eq:discretecon}. We consider the function $y_d$ defined as follows, 
    \begin{equation*}
y_d=\left\{\begin{array}{cc}
\cL^t\cL\bar{y}+\bar{y} & |x|>1\\
\cL^t\cL\bar{y}+\bar{y}+2& |x|\le1
\end{array}\right..
\end{equation*}
The function $\bar{y}$ is given by
\begin{equation*}
\bar{y}=\left\{
\begin{array}{cc}
|x|^2-1&|x|\le 1\\
v(|x|)+(1-\phi(|x|))w(x)&1\le|x|\le3\\
w(x)&|x|\ge3
\end{array}\right.,
\end{equation*}
where
\begin{align*}
&v(|x|)=(|x|^2-1)(1-\frac{|x|-1}{2})^4+\frac{1}{4}(|x|-1)^2(|x|-3)^4,\\
&\phi(|x|)=(1+4\frac{|x|-1}{2}+10(\frac{|x|-1}{2})^2+20(\frac{|x|-1}{2})^3)(1-\frac{|x|-1}{2})^4,\\
&w(x)=2\sin(\frac{\pi}{8}(x_1+4))^3\sin(\frac{\pi}{8}(x_2+4))^3.
\end{align*}

By construction, the function $\bar{y}$ is the exact solution and $\bar{y}\le\psi$. The active set is the unit disk $\{x: |x|<1\}$.
The convergence rates on uniform meshes are reported in Table \ref{table:diskp1fem}. As we can see, the convergence rate is around $O(h^2)$ (in average) for the $L_2$  and $L_\infty$ error of the state and $O(h^\frac32)$ for the $L_2$ error of the control. These are better than the estimates in Theorem \ref{theorem:main} and consistent with the fact $\bar{y}\in H^{\frac72-\varepsilon}(\O)$. We also observe $O(h)$ convergence of the state in $\|\cdot\|_{h}$ which is consistent with Theorem \ref{theorem:main}. See Figure \ref{fig:diskactive} for the optimal state, control and active set at level 6.
\end{example}

{\scriptsize
\begin{center}
\captionof{table}{Convergence results for Example \ref{ex:diskactiveset} on uniform meshes with $\bz=[1\ 0]^t$}
\vspace{0.3cm}
\begin{tabular}{|c|c|c|c|c|c|c|c|c|}\hline\label{table:diskp1fem}
$k$&$\|\bar{y}-y_h\|_{L_2(\Omega)}$&Order&$\|\bar{y}-y_h\|_{h}$&Order&$\|\bar{u}-u_h\|_{L_2(\Omega)}$&Order&$\|\bar{y}-y_h\|_{L^\infty(\O)}$&Order\\[1.5ex]
\hline
$1$&3.00e+01&-&3.14e+01&-&6.23e+01&-&8.45e+00&-\\[0.8ex]
\hline
$2$&1.02e+01&1.55&9.33e+00&1.75&2.66e+01&1.23&4.68e+00&0.85\\[0.8ex]
\hline
$3$&2.70e+00&1.92&3.34e+00&1.48&9.33e+00&1.51&9.50e-01&2.30\\[0.8ex]
\hline
$4$&7.45e-01&1.86&1.19e+00&1.48&3.54e+00&1.40&2.80e-01&1.76\\[0.8ex]
\hline
$5$&1.40e-01&2.41&4.30e-01&1.47&1.39e+00&1.35&5.03e-02&2.48\\[0.8ex]
\hline
$6$&6.74e-02&1.05&1.95e-01&1.14&5.59e-01&1.31&2.11e-02&1.25\\[0.8ex]
\hline
$7$&3.76e-02&0.84&9.58e-02&1.02&2.08e-01&1.43&1.28e-02&0.72\\[0.8ex]
\hline
$8$&7.18e-03&2.39&4.31e-02&1.15&6.75e-02&1.62&2.56e-03&2.32\\[0.8ex]
\hline
\end{tabular}
\end{center}}

\begin{figure}[ht]
    \centering
    \subfloat{\includegraphics[height=1.8in]{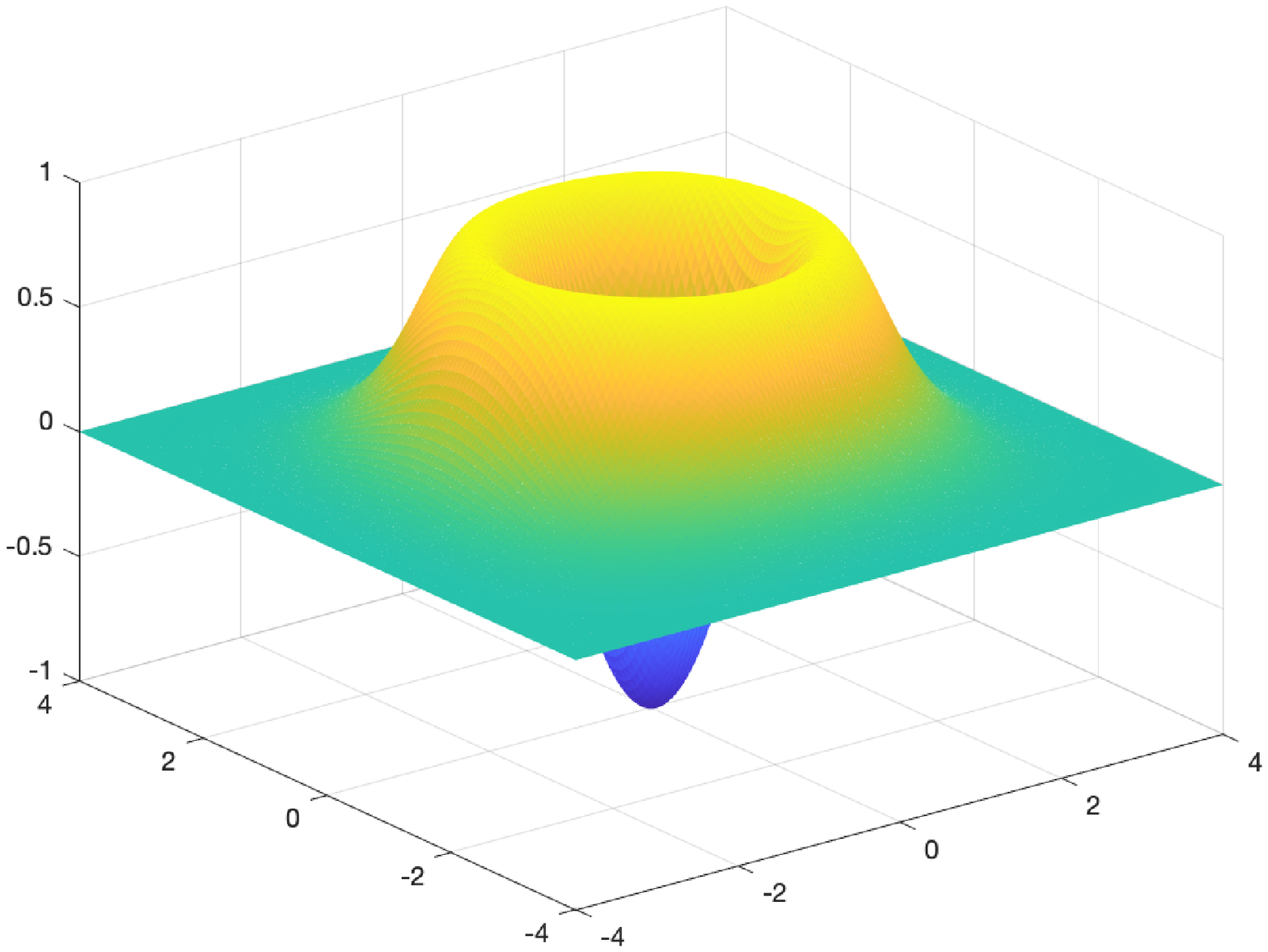}} 
    \subfloat{\includegraphics[height=1.8in]{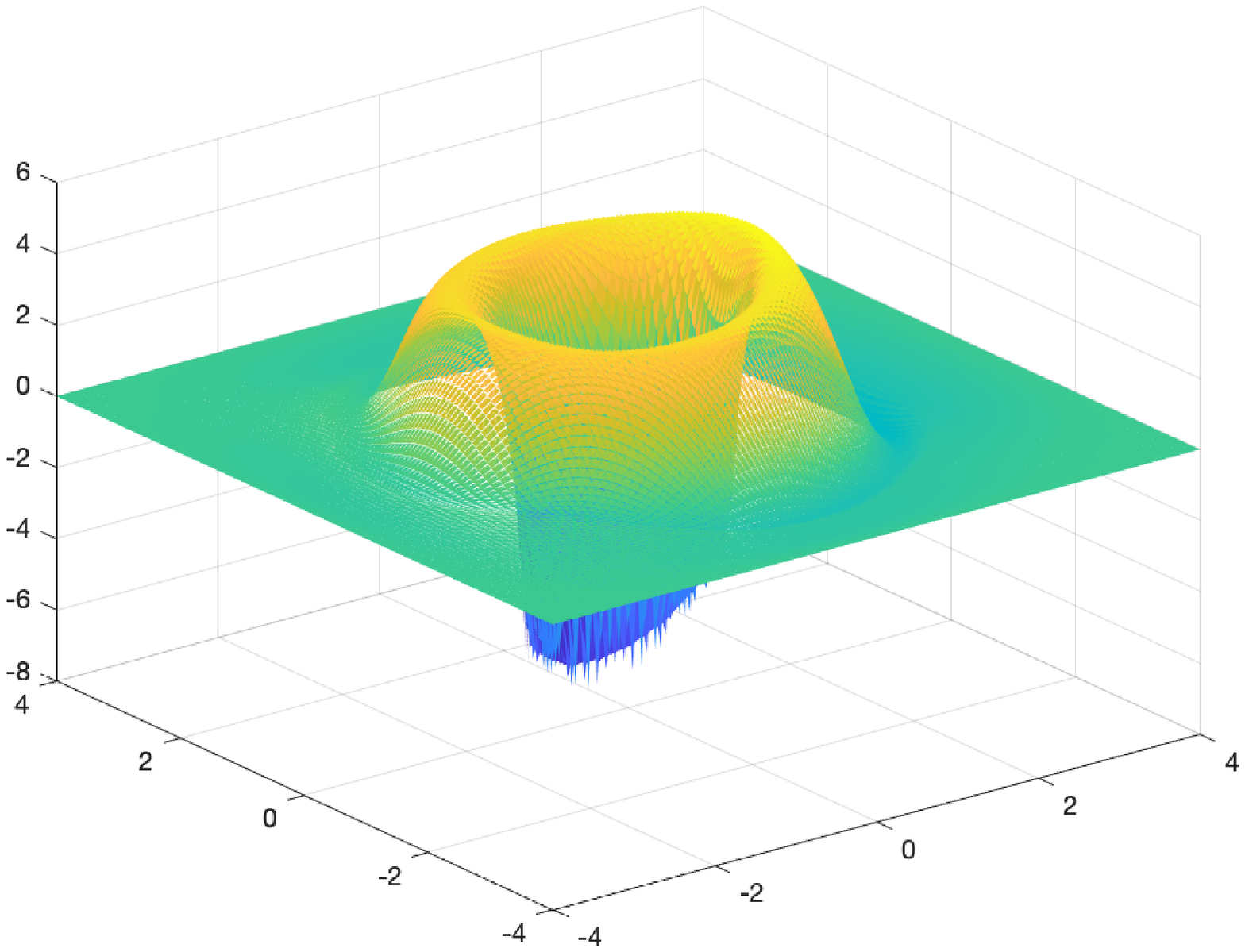}}
    \vfill
    \subfloat{\includegraphics[height=1.8in]{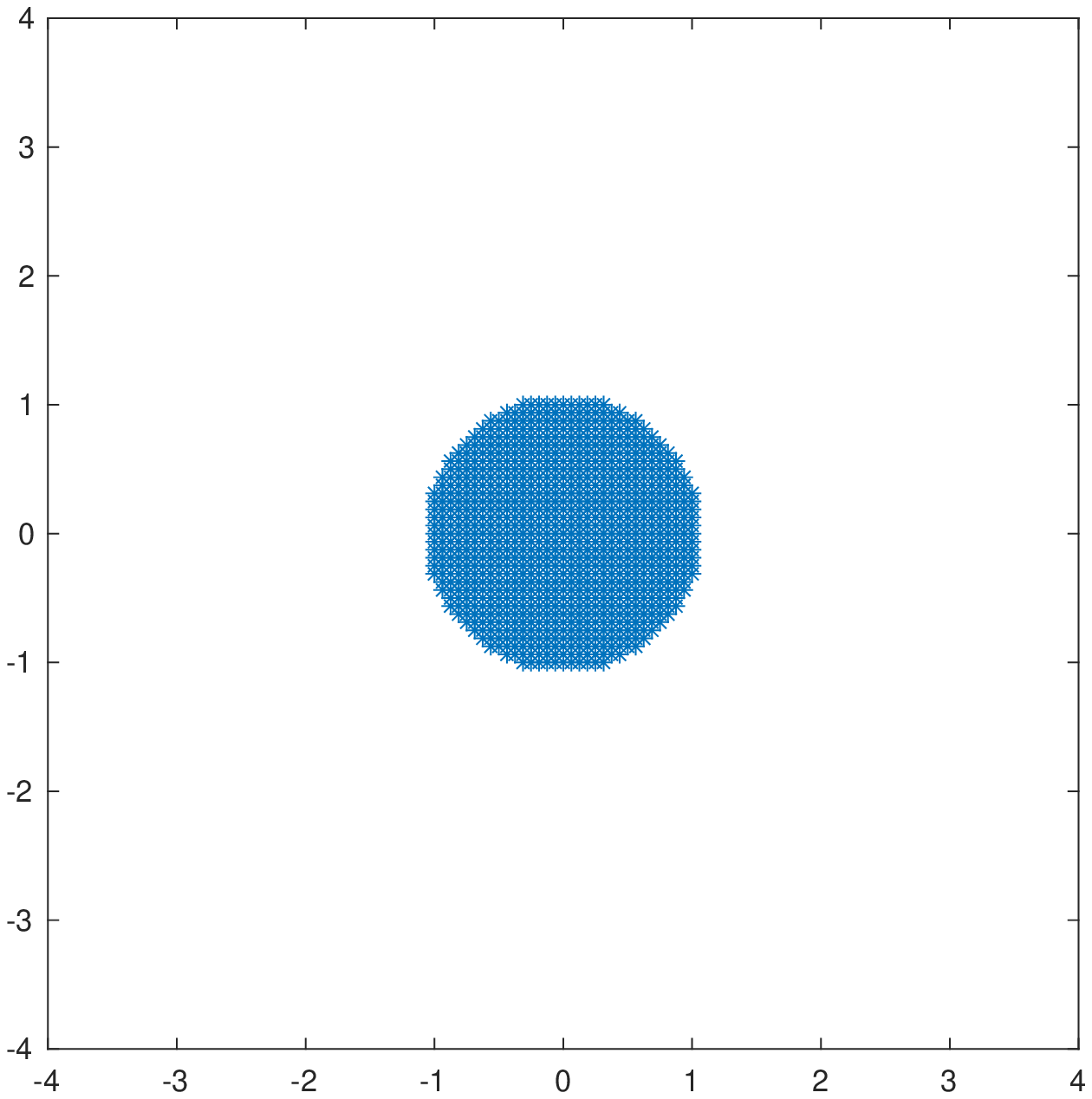}}
    \caption{State, control and active set at level 6 for Example \ref{ex:diskactiveset}}\label{fig:diskactive}
\end{figure}

\begin{figure}[ht]
    \centering
    \subfloat{\includegraphics[height=1.5in]{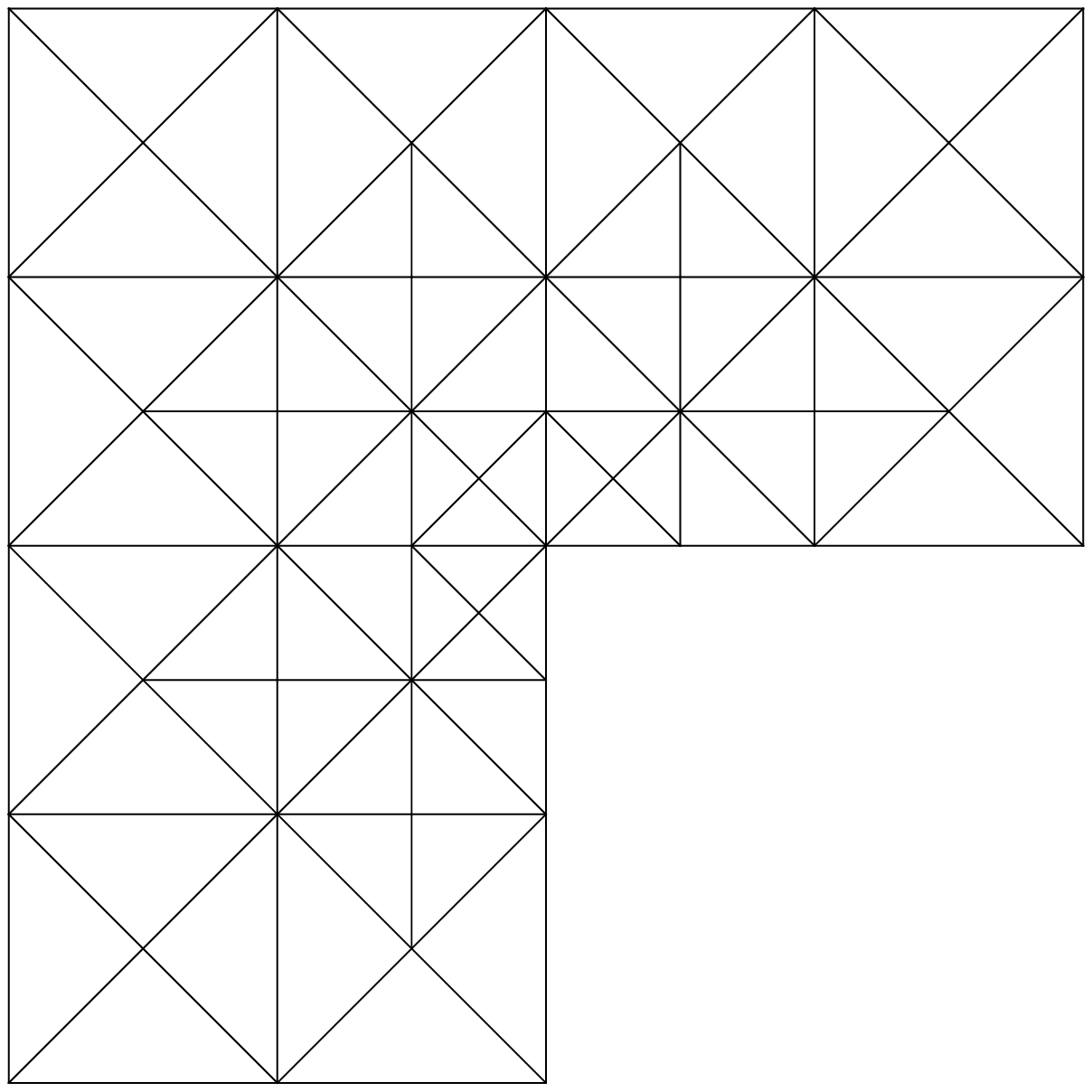}} 
    \subfloat{\includegraphics[height=1.5in]{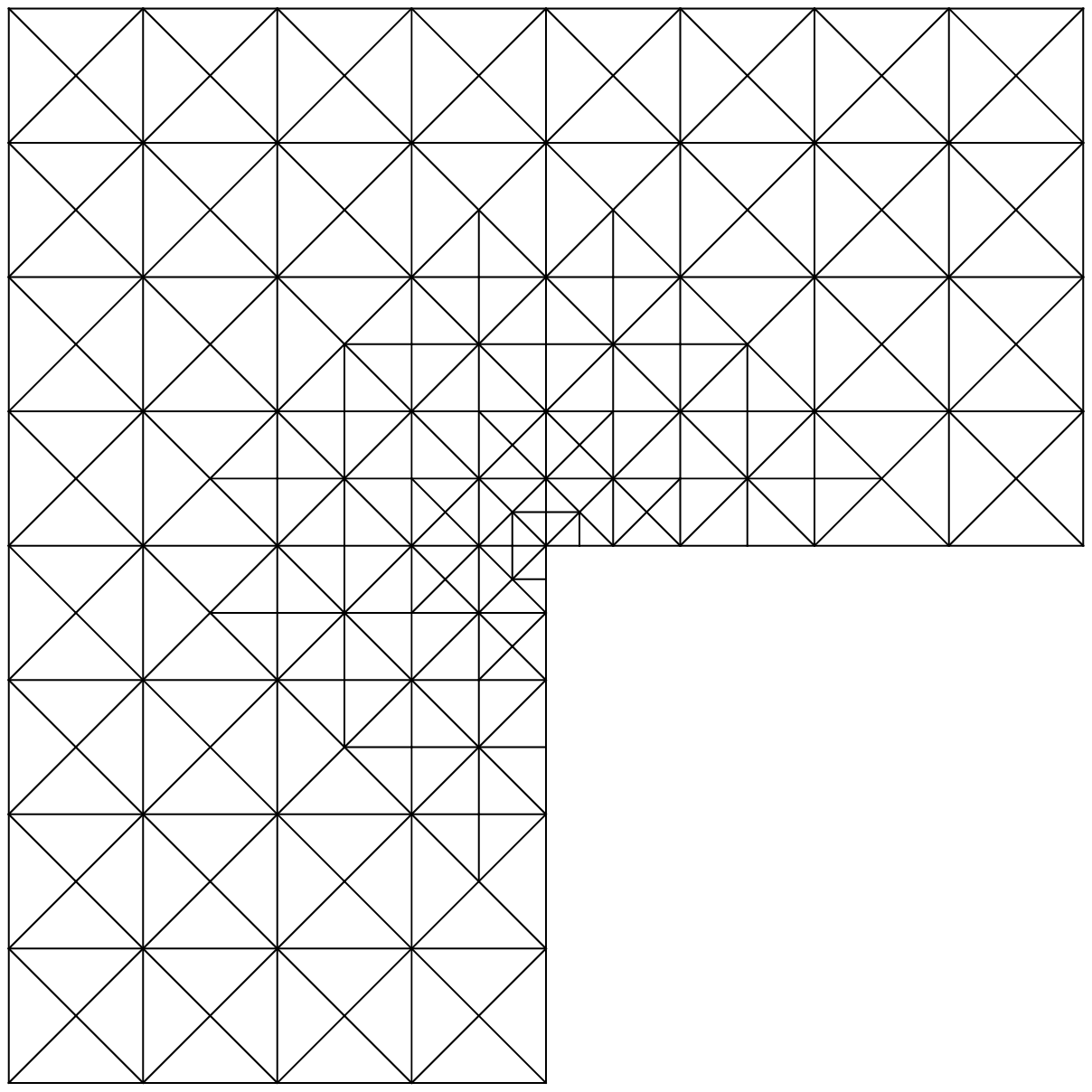}}
    \subfloat{\includegraphics[height=1.5in]{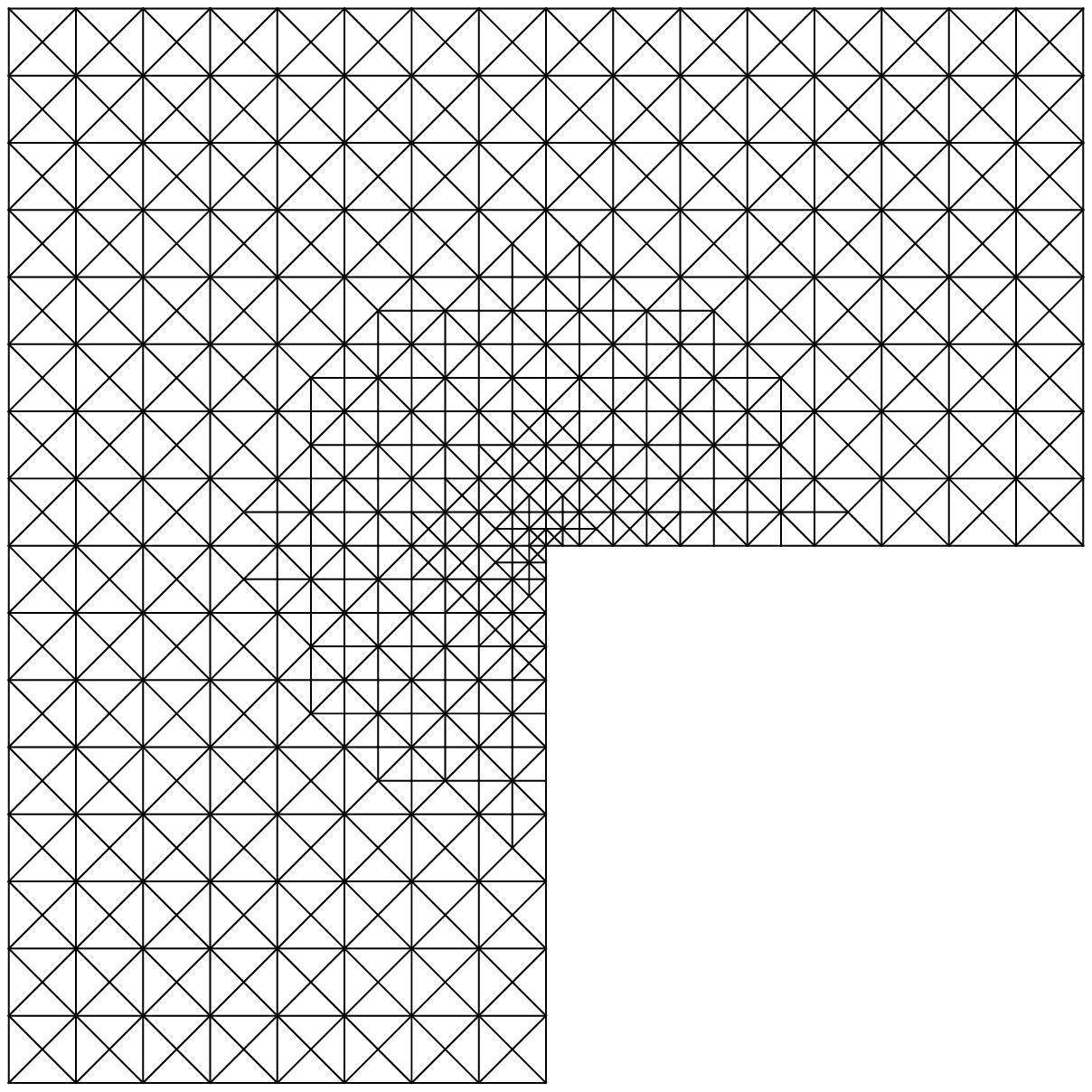}}
    \caption{Graded meshes on L-shaped domain}\label{fig:lshapedgraded}
\end{figure}

\begin{example}[L-shaped Domain \cite{brenner2021p1}]\label{ex:lshaped}
    For this example, we take $\O=[-8,8]^2\setminus[0,8]\times[-8,0]$, $g=10$ and $\bz=[2,\ 1]^t$ in \eqref{eq:discretecon}. This example is a modification of Example \ref{ex:diskactiveset}. The functions $y_d$ and $\psi$ are shifted using the point $x_\ast=(-4,4)$. After that, a singular function $10\psi_s$ is added to the functions $y_d$ and $\psi$. By construction, the exact solution is $\tilde{y}=\bar{y}(x-x_\ast)+10\psi_s$ where $\bar{y}$ is defined in Example \ref{ex:diskactiveset}. Here the function $\psi_s$ is defined by
    \begin{equation}
    \begin{aligned}
        \cL\psi_s&=0\quad\text{in}\ \O,\\
        \psi_s&=1\quad\text{on}\ \partial\O.
    \end{aligned}
    \end{equation} 
\end{example}

    Tables \ref{table:diskp1fem1} and \ref{table:diskp1fem2} contain the convergence rates of the discontinuous Galerkin methods \eqref{eq:discretecon} on uniform meshes and graded meshes (see Figure \ref{fig:lshapedgraded}). As we can see in Table \ref{table:diskp1fem1}, the convergence for the state in $\|\cdot\|_{h}$ and $L_\infty$ norms is approaching $O(h^\frac23)$. This coincides with the theoretical results with $\tau=\alpha=\frac23-\varepsilon$. The convergence of the state in $L_2$ norm is close to $O(h^2)$ and the convergence of the control in $L_2$ norm is approaching $O(h^\frac32)$. These are better than the estimates in Theorem \ref{theorem:main} and consistent with the fact that $\bar{y}\in H^{\frac72-\varepsilon}(\O)$ and $\bar{u}\in H^{\frac32-\varepsilon}(\O)$. We also observe clear improvements of the convergence rates for the state in $\|\cdot\|_{h}$ and $L_\infty$ norms in Table \ref{table:diskp1fem2}. This also coincides with Theorem \ref{theorem:main} with $\tau=1$.

{\scriptsize
\begin{center}
\captionof{table}{Convergence results for Example \ref{ex:lshaped} on uniform meshes}
\vspace{0.3cm}
\begin{tabular}{|c|c|c|c|c|c|c|c|c|}\hline\label{table:diskp1fem1}
$k$&$\|\bar{y}-y_h\|_{L_2(\Omega)}$&Order&$\|\bar{y}-y_h\|_{h}$&Order&$\|\bar{u}-u_h\|_{L_2(\Omega)}$&Order&$\|\bar{y}-y_h\|_{L^\infty(\O)}$&Order\\[1.5ex]
%\hline
%$1$&&&&\\[0.8ex]
\hline
$1$&3.30e+01&-&2.79e+01&-&6.79e+01&-&7.03e+00&-\\[0.8ex]
\hline
$2$&1.97e+01&0.75&2.39e+01&0.22&3.10e+01&1.13&6.73e+00&0.06\\[0.8ex]
\hline
$3$&7.34e+00&1.42&1.52e+01&0.65&9.37e+00&1.73&4.90e+00&0.46\\[0.8ex]
\hline
$4$&2.02e+00&1.86&8.82e+00&0.79&3.31e+00&1.50&3.08e+00&0.67\\[0.8ex]
\hline
$5$&5.09e-01&1.99&4.66e+00&0.92&1.27e+00&1.39&1.88e+00&0.71\\[0.8ex]
\hline
$6$&1.38e-01&1.88&2.44e+00&0.93&4.99e-01&1.35&1.15e+00&0.71\\[0.8ex]
\hline
$7$&4.76e-02&1.54&1.33e+00&0.87&1.81e-01&1.47&7.15e-01&0.69\\[0.8ex]
\hline
$8$&1.10e-02&2.12&7.64e-01&0.80&5.82e-02&1.63&4.47e-01&0.68\\[0.8ex]
\hline
\end{tabular}

\captionof{table}{Convergence results for Example \ref{ex:lshaped} on graded meshes}
\vspace{0.3cm}
\begin{tabular}{|c|c|c|c|c|c|c|c|c|}\hline\label{table:diskp1fem2}
$k$&$\|\bar{y}-y_h\|_{L_2(\Omega)}$&Order&$\|\bar{y}-y_h\|_{h}$&Order&$\|\bar{u}-u_h\|_{L_2(\Omega)}$&Order&$\|\bar{y}-y_h\|_{L^\infty(\O)}$&Order\\[1.5ex]
\hline
$1$&2.13e+01&-&2.94e+01&-&3.09e+01&-&6.78e+00&-\\[0.8ex]
\hline
$2$&1.14e+01&0.91&2.66e+01&0.15&2.49e+01&0.31&5.07e+00&0.42\\[0.8ex]
\hline
$3$&4.18e+00&1.44&1.78e+01&0.58&4.66e+00&2.42&2.55e+00&0.99\\[0.8ex]
\hline
$4$&1.04e+00&2.01&9.67e+00&0.88&1.77e+00&1.40&9.81e-01&1.38\\[0.8ex]
\hline
$5$&2.52e-01&2.04&4.55e+00&1.09&8.51e-01&1.05&3.15e-01&1.64\\[0.8ex]
\hline
$6$&6.75e-02&1.90&2.03e+00&1.16&3.74e-01&1.19&1.50e-01&1.07\\[0.8ex]
\hline
$7$&3.24e-02&1.06&9.06e-01&1.17&1.66e-01&1.17&5.93e-02&1.34\\[0.8ex]
\hline
$8$&3.99e-03&3.02&4.14e-01&1.13&5.49e-02&1.59&2.80e-02&1.08\\[0.8ex]
\hline
\end{tabular}
\end{center}}

\section{Concluding Remark}\label{sec:conremark}

We propose and analyze discontinuous Galerkin methods to solve an optimal control problem with a general state equation and pointwise state constraints on general polygonal domains. Concrete error estimates are established and numerical results are provided to support the theoretical results. We do not consider convection-dominated case in this paper, hence the constants throughout this paper might depend on $\bz$ and $\gamma$. However, we would like to point out that the potential of our methods is to solve optimal control problems governed by convection-dominated equations with pointwise state constraints. 
There are some previous work \cite{leykekhman2012local,heinkenschloss2010local} concerning the optimal control problems governed by convection-dominated problems without state constraints. As pointed out in \cite{leykekhman2012local}, the weak treatment of the Dirichlet boundary conditions (as we did in \eqref{eq:ddef}) are crucial for optimal control problems governed by convection-dominated equations. 
However, rigorous analysis of the convection-dominated case seems nontrivial. We will investigate this in future work.

\appendix  

\section{Primal-dual active set algorithm}\label{apdix:pdas}

Now we rewrite \eqref{eq:discretecon} in matrix and vector form. Let $\bM_h$ (resp., $\bA_h$) denote the mass (resp., stiffness) matrix represent the bilinear form $(\cdot,\cdot)_\LT$ (resp., $a_h(\cdot,\cdot)$) with respect to the natural discontinuous nodal basis in $V_h$. Assume $\bL_h$ represents $\cL_{h,g}$ and $\mathbf{g}$ represents the boundary integral term in \eqref{eq:ddef}. It follows from \eqref{eq:ddef} that
\begin{equation}
    \mathbf{v}^t\bM_h\bL_h(\mathbf{u})=\mathbf{v}^t\bA_h\mathbf{u}+\mathbf{v}^t\mathbf{g}\quad\forall \mathbf{u},\mathbf{v}\in\mathbb{R}^{n_h},
\end{equation}
where $n_h=\dim V_h$.
This leads to the relation
\begin{equation}\label{eq:bhdef}
    \bL_h(\mathbf{u})=\bM_h^{-1}\bA_h\mathbf{u}+\bM_h^{-1}\mathbf{g}\quad\forall \mathbf{u}\in\mathbb{R}^{n_h},
\end{equation}
and
\begin{equation}\label{eq:bhdeft}
    (\bL_h(\mathbf{u}))^t=\mathbf{u}^t\bA^t_h\bM_h^{-1}+\mathbf{g}^t\bM_h^{-1}\quad\forall \mathbf{u}\in\mathbb{R}^{n_h}.
\end{equation}
Thus \eqref{eq:discretecon} can be rewritten as, by \eqref{eq:bhdef} and \eqref{eq:bhdeft},
\begin{equation}\label{pd}
\begin{aligned}
&\argmin_{\mathbf{y}_h\le\bm{\psi}} \frac{1}{2}(\mathbf{y}_h-\mathbf{y}_d)^t\mathbf{M}_h(\mathbf{y}_h-\mathbf{y}_d)+\frac{\beta}{2}(\bL_h(\mathbf{y}_h))^t\mathbf{M}_h\bL_h(\mathbf{y}_h)\\
&=\argmin_{\mathbf{y}_h\le\bm{\psi}}\frac{1}{2}\mathbf{y}_h^t\mathbf{M}_h\mathbf{y}_h-\mathbf{y}_h^t(\mathbf{M}_h\mathbf{y}_d)\\
&\hspace{1cm}+\frac{\beta}{2}(\mathbf{y}_h^t\bA^t_h\bM_h^{-1}+\mathbf{g}^t\bM_h^{-1})\bM_h(\bM_h^{-1}\bA_h\mathbf{y}_h+\bM_h^{-1}\mathbf{g})\\
&=\argmin_{\mathbf{y}_h\le\bm{\psi}}\frac{1}{2}\mathbf{y}_h^t\left[\beta\mathbf{A}^t_h\mathbf{M}_h^{-1}\mathbf{A}_h+\mathbf{M}_h\right]\mathbf{y}_h-\mathbf{y}_h^t(-\beta\bA^t_h\bM_h^{-1}\mathbf{g}+\mathbf{M}_h\mathbf{y}_d).
\end{aligned}    
\end{equation}

Denote $\mathbf{B}_h=\beta\mathbf{A}^t_h\mathbf{M}_h^{-1}\mathbf{A}_h+\mathbf{M}_h$ and $\tilde{\mathbf{y}}_d=-\beta\bA^t_h\bM_h^{-1}\mathbf{g}+\mathbf{M}_h\mathbf{y}_d$. Let $\mathfrak{n}=\{1,2,\ldots,n_h\}$, the primal-dual active set method for (\ref{pd}) is the following.
\begin{itemize}
\item Given an initial guess $(\mathbf{y}_0, \bm{\lambda}_0)$ where $\bm{\lambda}_0\ge0$, we define
\begin{eqnarray*}
\mathcal{A}_0&=&\{j\in\mathfrak{n}:  \bm{\lambda}_0+c(\mathbf{y}_0(j)-\bm{\psi}(j))>0\},\\
\mathcal{I}_0&=&\{j\in\mathfrak{n}:  \bm{\lambda}_0+c(\mathbf{y}_0(j)-\bm{\psi}(j))\le0\}=\mathfrak{n}\setminus \mathcal{A}_0.
\end{eqnarray*}
\item For $k\ge1$ we recursively solve the system 
\begin{eqnarray}
\label{qd1}\mathbf{B}_h\mathbf{y}_k+\bm{\lambda}_k&=&\tilde{\mathbf{y}}_d,\\
\mathbf{y}_k&=&\bm{\psi}\ \ \ \mbox{on}\ \ \ \mathcal{A}_{k-1},\\
\bm{\lambda}_k&=&0\ \ \ \ \mbox{on}\ \ \ \mathcal{I}_{k-1}.\label{qd3}
\end{eqnarray}
\item Then update the active set and inactive set by
\begin{eqnarray*}
\mathcal{A}_k&=&\{j\in\mathfrak{n}:  \bm{\lambda}_k+c(\mathbf{y}_k(j)-\bm{\psi}(j))>0\},\\
\mathcal{I}_k&=&\{j\in\mathfrak{n}:  \bm{\lambda}_k+c(\mathbf{y}_k(j)-\bm{\psi}(j))\le0\}=\mathfrak{n}\setminus \mathcal{A}_k.
\end{eqnarray*}
\end{itemize}
The unique solution of (\ref{qd1})-(\ref{qd3}) is determined by
\begin{eqnarray}
\nonumber \mathbf{y}_k({\st \mathcal{A}_{k-1}})&=&\bm{\psi}({\st \mathcal{A}_{k-1}}),\\
\nonumber \bm{\lambda}_k({\st\mathcal{I}_{k-1}})&=&0,\\
\mathbf{B}_h({\st \mathcal{I}_{k-1}, \mathcal{I}_{k-1}})\mathbf{y}_k({\st \mathcal{I}_{k-1}})&=&\tilde{\mathbf{y}}_d({\st\mathcal{I}_{k-1}})-\mathbf{B}_h({\st\mathcal{I}_{k-1},\mathcal{A}_{k-1}})\bm{\psi}({\st\mathcal{A}_{k-1}})\label{inactivesys},\\
\nonumber \bm{\lambda}_k({\st\mathcal{A}_{k-1}})&=&\tilde{\mathbf{y}}_{d}({\st \mathcal{A}_{k-1}})-(\mathbf{B}_h\mathbf{y}_{k})({\st\mathcal{A}_{k-1}}).
\end{eqnarray}
\begin{remark}
    Here we follow the MATLAB convention that the vector $\by_k({\st\cA_{k-1}})$ is the subvector of $\by_k$ generated by the components of $\by_k$ corresponding to the index set ${\st\cA_{k-1}}$, the matrix $\mathbf{B}_h({\st\mathcal{I}_{k-1},\mathcal{I}_{k-1}})$ is the submatrix of $\mathbf{B}_h$ generated by the rows and columns of $\mathbf{B}_h$ corresponding to the index set ${\st\mathcal{I}_{k-1}}$, etc.
\end{remark}

\section{Proofs of Lemma \ref{lemma:ah} and Lemma \ref{lemma:rhestimates}}\label{apdix:pfrh}
\begin{proof}[Proof of Lemma \ref{lemma:ah}]
    It is well-known that \cite{arnold2002unified,riviere2008discontinuous,BS}
        \begin{alignat}{3}
            \asiph(w,v)&\le C\trinorm{w}_{h}\trinorm{v}_{h}\quad&&\forall w,v\in \En+V_h,\\
            \asiph(v,v)&\ge C\trinorm{v}^2_{h}\quad&&\forall v\in V_h.\label{eq:asipcoer}
        \end{alignat}
    For the advection-reaction term, we have, for all $w,v\in\En+V_h$,
    \begin{equation*}\label{eq:aarhesti}
        \begin{aligned}
            \aarh(w,v)&=\sum_{T\in\mathcal{T}_h}(\bz\cdot\nabla w+\gamma w, v)_T-\sum_{e\in\cE^i_h\cup\cE^{b,-}_h}(\bn\cdot\bz[w],\{v\})_e\\
            &\lesssim\left(\sum_{T\in\mathcal{T}_h}\|\nabla w\|^2_{L_2(T)}\right)^\frac12\|v\|_\LT+\|w\|_\LT\|v\|_\LT\\
            &\hspace{0.3cm}+\left(\sum_{e\in\cE^i_h\cup\cE^{b,-}_h}\frac{\sigma}{h_e}\|[w]\|^2_{L_2(e)}\right)^\frac12\left(\sum_{e\in\cE^i_h\cup\cE^{b,-}_h}\frac{h_e}{\sigma}\|\{v\}\|^2_{L_2(e)}\right)^\frac12\\
            &\lesssim\trinorm{w}_{h}\trinorm{v}_{h},
        \end{aligned}
    \end{equation*}
    where we use $\bz\in [W^{1,\infty}(\Omega)]^2$, $\gamma\in W^1_{\infty}(\Omega)$, \eqref{eq:traceinq} and \eqref{eq:dgpoin}.
    Furthermore, upon integration by parts, we have, for all $v\in V_h$,
    \begin{align*}
        \aarh(v,v)&=\sum_{T\in\mathcal{T}_h}(\bz\cdot\nabla v+\gamma v, v)_T-\sum_{e\in\cE^i_h\cup\cE^{b,-}_h}(\bn\cdot\bz[v],\{v\})_e\\
        &=\sum_{T\in\mathcal{T}_h}((\gamma-\frac12\nabla\cdot\bz)v,v)_T+\sum_{T\in\mathcal{T}_h}\int_{\partial T}\frac12(\bz\cdot\bn)v^2\ \!ds\\
        &\hspace{0.5cm}-\sum_{e\in\cE^i_h\cup\cE^{b,-}_h}\bz\cdot\bn[v]\{v\}\ \!ds\\
        &=\sum_{T\in\mathcal{T}_h}((\gamma-\frac12\nabla\cdot\bz)v,v)_T+\int_{\partial\O}\frac12|\bz\cdot\bn|v^2\ \!ds.
    \end{align*}
    By the assumption \eqref{eq:advassump}, we immediately have $\aarh(v,v)\ge0$. This finishes the proof.
    \end{proof}

\begin{proof}[Proof of Lemma \ref{lemma:rhestimates}]

It follows from \eqref{eq:ahcont}, \eqref{eq:ahcoer}, \eqref{eq:rh} and \eqref{eq:interpolation} that
\begin{equation}
    \begin{aligned}
    \trinorm{I_hw-R_hw}^2_h&\le Ca_h(I_hw-R_hw,I_hw-R_hw)\\
    &=C a_h(I_hw-w,I_hw-R_hw)\\
    %&\le C\trinorm{I_hw-w}_h\trinorm{I_hw-R_hw}_h\\
    &\le Ch^\tau\|\cL w\|_\LT\trinorm{I_hw-R_hw}_h,
    \end{aligned}
\end{equation}
which implies
\begin{equation}
    \trinorm{I_hw-R_hw}_h\le Ch^\tau\|\cL w\|_\LT.
\end{equation}
Hence we have \eqref{eq:rhh} by triangle inequality. The estimate \eqref{eq:rhl2} is established by a duality argument. Let $\phi\in\Ho$ be defined by
\begin{equation}\label{eq:dualp}
\begin{alignedat}{3}
    -\Delta\phi-\bz\cdot\nabla\phi+(\gamma-\nabla\cdot\bz)\phi&=w-R_hw\quad&&\text{in}\quad\O,\\
    \phi&=0\quad&&\text{on}\quad\partial\O.
\end{alignedat}
\end{equation}
The weak form of the dual problem \eqref{eq:dualp} is to find $\phi\in\Ho$ such that
\begin{equation}
    a(v,\phi)=(v,w-R_hw)\quad\forall v\in\Ho.
\end{equation}
By elliptic regularity \eqref{eq:globalreg}, we have
\begin{equation}\label{eq:dualreg}
    \|\phi\|_{H^{1+\alpha}(\Omega)}\le C_{\Omega}\|w-R_hw\|_{L_2(\Omega)}.
\end{equation}
It follows from \eqref{eq:dualp} that, 
\begin{equation}
    \begin{aligned}
        &\|w-R_hw\|^2_\LT=(-\Delta\phi-\bz\cdot\nabla\phi+(\gamma-\nabla\cdot\bz)\phi,w-R_hw)_\LT\\
        &\quad=(-\Delta\phi,w-R_hw)_\LT+\sum_{T\in\cT_h}(-\bz\cdot\nabla\phi+(\gamma-\nabla\cdot\bz)\phi,w-R_hw)_T.
    \end{aligned}
\end{equation}
By the adjoint consistency of the SIP method, we have
\begin{equation}\label{eq:sipdualconsis}
    (-\Delta\phi,w-R_hw)_\LT=a_h^{sip}(w-R_hw,\phi).
\end{equation}
It follows from integration by parts that
\begin{equation}\label{eq:ardis1}
    \begin{aligned}
        &\sum_{T\in\cT_h}(-\bz\cdot\nabla\phi+(\gamma-\nabla\cdot\bz)\phi,w-R_hw)_T\\
        =&\sum_{T\in\cT_h}(\bz\cdot\nabla(w-R_hw),\phi)_T+(\gamma(w-R_hw),\phi)_T\\
        &\hspace{0.5cm}-\sum_{T\in\cT_h}\int_{\partial T}(\bz\cdot\bn)(w-R_hw)\phi\ \!ds.
    \end{aligned}
\end{equation}
The last term can be rewritten as the following \cite{arnold2002unified,di2011mathematical},
\begin{equation}\label{eq:ardis2}
    \begin{aligned}
        &\sum_{T\in\cT_h}\int_{\partial T}(\bz\cdot\bn)(w-R_hw)\phi\ \!ds\\
        =&\sum_{e\in\cE_h^i}\int_e\bz\cdot\bn [(w-R_hw)\phi]\ \!ds+\sum_{e\in\cE_h^b}\int_e\bz\cdot\bn(w-R_hw)\phi\ \!ds\\
        =&\sum_{e\in\cE_h^i}\int_e\bz\cdot\bn [w-R_hw]\{\phi\}\ \!ds+\sum_{e\in\cE_h^i}\int_e\bz\cdot\bn \{w-R_hw\}[\phi]\ \!ds\\
        &\hspace{0.5cm}+\sum_{e\in\cE_h^b}\int_e\bz\cdot\bn(w-R_hw)\phi\ \!ds.
    \end{aligned}
\end{equation}
It then follows from $[\phi]=0$ on internal edges and $\phi=0$ on $\partial\O$ that
\begin{equation}\label{eq:ardis3}
    \sum_{T\in\cT_h}\int_{\partial T}(\bz\cdot\bn)(w-R_hw)\phi\ \!ds=\sum_{e\in\cE_h^i\cup\cE_h^{b,-}}\int_e\bz\cdot\bn [(w-R_hw)]\{\phi\}\ \!ds.
\end{equation}
According to \eqref{eq:ardis1}-\eqref{eq:ardis3}, we conclude
\begin{equation}\label{eq:ardualconsis}
    \sum_{T\in\cT_h}(-\bz\cdot\nabla\phi+(\gamma-\nabla\cdot\bz)\phi,w-R_hw)_T=a_h^{ar}(w-R_hw,\phi),
\end{equation}
which implies the following together with \eqref{eq:sipdualconsis},
\begin{equation}\label{eq:duall2norm}
    \|w-R_hw\|^2_\LT=a_h(w-R_hw,\phi).
\end{equation}
Therefore, it follows from \eqref{eq:duall2norm}, \eqref{eq:rh}, \eqref{eq:interpolation}, \eqref{eq:ahcont} and \eqref{eq:dualreg} that
\begin{equation}\label{eq:duall2}
    \begin{aligned}
        \|w-R_hw\|^2_\LT&=a_h(w-R_hw,\phi)\\
        &=a_h(w-R_hw,\phi-I_h\phi)\le C\trinorm{w-R_hw}_h\trinorm{\phi-I_h\phi}_h\\
        &\le Ch^\tau\trinorm{w-R_hw}_h\|w-R_hw\|_\LT.
    \end{aligned}
\end{equation}
We then obtain the estimate \eqref{eq:rhl2} combining \eqref{eq:rhh} and \eqref{eq:duall2}.

\end{proof}

\section*{Acknowledgement}
The authors would like to thank Prof. Susanne C. Brenner and Prof. Li-Yeng Sung for the suggestion and discussion regarding this project. The work of the third author was partially supported by the National Science Foundation under grant DMS-2111004.

\bibliographystyle{plain}
\bibliography{references}
 
\end{document}